\documentclass{article}

\usepackage[utf8]{inputenc} %
\usepackage[T1]{fontenc}    %
\usepackage{hyperref}       %
\usepackage{url}            %
\usepackage{booktabs}       %
\usepackage{amsfonts}       %
\usepackage{nicefrac}       %
\usepackage{microtype}      %
\usepackage{xcolor}         %
\usepackage{fullpage}
\usepackage{amsmath,amssymb,amsthm}
\usepackage{algorithm,algpseudocode}
\usepackage{graphicx}
\usepackage{float}

\def\argmin{\operatornamewithlimits{arg\,min}}

\newtheorem{lem}{Lemma}

\usepackage{multirow}
\title{A Structured Tour of Optimization with Finite Differences}

\author{Marco Rando\thanks{Malga - DIBRIS, University of Genova, IT
		({\tt marco.rando@edu.unige.it}, {\tt lorenzo.rosasco@unige.it}).}
	\and Cesare Molinari\thanks{MaLGa - DIMA, University of Genova, Italy 
		({\tt cesare.molinari@edu.unige.it}, {\tt silvia.villa@unige.it}).}
	\and Lorenzo Rosasco\footnotemark[1] \thanks{Istituto Italiano di Tecnologia, Genova, Italy and CBMM - MIT, Cambridge, MA, USA}
    \and Silvia Villa\footnotemark[2]
}

\date{}

\begin{document}

\maketitle

\begin{abstract}
\noindent Finite-difference methods are widely used for zeroth-order optimization in settings where gradient information is unavailable or expensive to compute. These procedures mimic first-order strategies by approximating gradients through function evaluations along a set of random directions. From a theoretical perspective, recent studies indicate that imposing structure (such as orthogonality) on the chosen directions allows for the derivation of convergence rates comparable to those achieved with unstructured random directions (i.e., directions sampled independently from a distribution). Empirically, although structured directions are expected to enhance performance, they often introduce additional computational costs, which can limit their applicability in high-dimensional settings. In this work, we examine the impact of structured direction selection in finite-difference methods. We review and extend several strategies for constructing structured direction matrices and compare them with unstructured approaches in terms of computational cost, gradient approximation quality, and convergence behavior. Our evaluation spans both synthetic tasks and real-world applications such as adversarial perturbation. The results demonstrate that structured directions can be generated with computational costs comparable to unstructured ones while significantly improving gradient estimation accuracy and optimization performance.
\end{abstract}

\section{Introduction}

Zeroth-order optimization problems are a class of problems in which only evaluations of the objective function are accessible, while gradient information is either unavailable or prohibitively costly to obtain. Such settings commonly occur when the function is evaluated via simulation or when the analytical form of the gradient is unavailable or too expensive to compute \cite{intro_schein,gain_tuning_zth,salimans2017evolution,z0_sig_proc,NEURIPS2018_7634ea65}.

\noindent Various classes of algorithms, also known as zeroth-order methods \cite{nesterov2017random,adabkb,fornasier,Totzeck2022,intro_schein,ga_review,chen2015randomized}, have been developed to address these problems, with finite-difference methods being one of them. %
These iterative procedures mimic first-order optimization strategies by approximating gradients through finite differences computed along a set of random directions. Two main variants of finite-difference methods are commonly distinguished: unstructured and structured approaches. In unstructured methods, directions are sampled independently from a fixed probability distribution, such as a standard Gaussian or a uniform distribution over the unit sphere \cite{nesterov2017random,Berahas2022,shamir2017optimal,salimans2017evolution}. In contrast, structured methods impose additional constraints on the directions such as orthogonality. %
Although structured methods have been shown to offer better performance in certain settings \cite{Berahas2022,rando2023optimal,choromanski2019unifying}, they often come with higher computational costs, especially in high-dimensional problems. This presents a practical limitation for structured methods, since many modern applications of finite-difference methods - such as reinforcement learning \cite{salimans2017evolution,choromanski2019unifying}, adversarial machine learning \cite{bszd,rando2024newformulationzerothorderoptimization}, and fine-tuning of large language models \cite{mezo,zo_llm_benchmark} - are usually high-dimensional optimization problems. In such regimes, unstructured methods are widely used due to the simplicity and low cost of generating random directions. However, these procedures often come at the expense of high variance and less informative gradient estimates, which may result in slower convergence in practice. 

\noindent Recent works \cite{choromanski2019unifying,kozak2021zeroth,rando2022stochastic,stief_zeroth,stief_zth,rando2023optimal,rando2024newformulationzerothorderoptimization} have proposed structured finite-difference methods with efficient procedures for constructing direction matrices. Despite this progress, to the best of our knowledge, no comprehensive comparison of structured and unstructured strategies has been conducted, especially in the practically relevant regime where the number of directions $\ell$ is smaller than the input dimension $d$. As a result, it remains unclear which structured strategies are most effective and under what conditions they offer a tangible advantage.

\noindent In this work, we aim to fill this gap. We review and extend several procedures for constructing structured direction matrices and evaluate them empirically on both synthetic benchmarks and real-world tasks. Our comparison focuses on three key aspects: computational cost, gradient approximation quality, and convergence under a fixed budget of function evaluations. Our findings show that it is possible to design structured methods with computational costs comparable to unstructured ones while achieving significantly better performance in terms of gradient accuracy and optimization progress. This suggests that structured methods are a viable and often preferable alternative (even in high-dimensional settings) and opens up promising directions for their application to large-scale problems, such as fine-tuning large language models.

\noindent The paper is organized as follows. In Section \ref{sec:prob_setup}, we define the zeroth-order optimization problem and introduce finite-difference algorithm. We also review and extend methods for generating structured direction matrices. In Section \ref{sec:perf_metrics}, we describe the experimental setup and present the performance metrics used in our evaluation. In Section \ref{sec:results}, we provide the experimental results, and in Section~\ref{sec:conclusion}, we conclude with some final remarks.

\section{Finite-Difference Algorithms for Zeroth-order Optimization}\label{sec:prob_setup}
Given a function $F: \mathbb{R}^d \to \mathbb{R}$, and we consider the unconstrained minimization problem
\begin{equation}\label{eqn:problem}
x^* \in \argmin_{x \in \mathbb{R}^d} F(x).
\end{equation}

\noindent We assume that it admits at least one solution and we consider the setting where the gradient $\nabla F(x)$ is not accessible, and only function evaluations are available. We focus on finite-difference methods which are a class of algorithms that allow to tackle problem \eqref{eqn:problem} in this setting. These iterative procedures mimick first-order strategies by approximating the gradient of the target function $F$ with finite-differences on a set of directions. Formally, let $P \in \mathbb{R}^{d \times \ell}$, and denote by $(p^{(i)})_{i = 1}^{\ell}$ its columns. For a discretization parameter $h > 0$, we define the finite-difference gradient estimator as
\begin{equation}\label{eqn:forward_fd}
g(x, h, P) := \frac{d}{\ell} \sum\limits_{i=1}^\ell \frac{F(x + h p^{(i)}) - F(x)}{h} p^{(i)}.
\end{equation}
This estimator serves as a surrogate for $\nabla F(x)$ and is used to construct an iterative optimization procedure that mimics first-order methods. A common approach is to emulate the gradient descent algorithm. At each iteration $k \in \mathbb{N}$, let $\gamma_k, h_k > 0$ be the step size and the discretization parameter, respectively, and let $P_k \in \mathbb{R}^{d \times \ell}$ be a direction matrix. The iteration is then defined as
\begin{equation}\label{eqn:random_fd}
    x_{k+1} = x_k - \gamma_k g(x_k, h_k, P_k).
\end{equation}
These methods have been widely studied in the literature and can be broadly categorized into two classes based on the choice of the direction matrix: unstructured and structured methods. Unstructured methods rely on randomly sampled directions, typically drawn independently and identically distributed (i.i.d.) from a fixed distribution. Common choices include sampling independently from a standard Gaussian distribution \cite{nesterov2017random, duchi_power_of_two}, uniformly over the unit sphere \cite{yousefian2012stochastic, Berahas2022,gasnikov_sph}, or from a Rademacher distribution \cite{zoro, bszd}, where each entry is independently sampled from $\{-1, 1\}$ with equal probability. Structured methods, on the other hand, construct direction matrices that satisfy structural constraints such as orthogonality \cite{kozak2021zeroth, rando2022stochastic, rando2023optimal, str_zo_applied}. The intuition behind this choice is that imposing orthogonality reduces redundancy and improves local exploration, thereby yielding a more accurate gradient approximation.

\noindent The focus of this work is to investigate how introducing structure into the direction matrix $P$ affects the efficiency and accuracy of the finite-difference method when $\ell \leq d$. To this end, and to isolate the effect of the direction matrix, we introduce a finite-difference algorithm that select the stepsize using an adaptive line-search procedure. 

\begin{algorithm}[H]
\caption{Finite-Difference Line-search Method}\label{algo:ffd}
\begin{algorithmic}[1]
\For{$k = 0, \cdots$}
    \State $g_k = g(x_k, h_k, P_k)$
    \While{$F(x_k - \gamma g_k) > F(x_k) - c \gamma \|g_k\|^2$ and $\gamma > \gamma_\text{min}$}
        \State $\gamma \leftarrow \max\{\gamma \theta, \gamma_{\text{min}}\} $
    \EndWhile
    \If{$F(x_k - \gamma g_k) \leq F(x_k) - c \gamma \|g_k\|^2$}
       \State $x_{k + 1} = x_k - \gamma g_k $ and $\gamma \leftarrow \min\{\gamma \rho, \gamma_\text{max}\}$
    \EndIf
\EndFor
\end{algorithmic}
\end{algorithm}

\noindent Given an initial point $x_0 \in \mathbb{R}^d$, at each iteration $k \in \mathbb{N}$, the algorithm constructs a direction matrix $P_k \in \mathbb{R}^{d \times \ell}$ and estimates the gradient $g_k = g(x_k, h_k, P_k)$ using eq. \eqref{eqn:forward_fd}. It then checks the sufficient decrease condition $F(x_k - \gamma g_k) \leq F(x_k) - c \gamma \|g_k\|^2$, for a given Armijo condition constant $c \geq 0$. If the condition holds, the update $x_{k+1}$ is computed via eq. \eqref{eqn:random_fd}, and the stepsize is increased as $\gamma \leftarrow \min\{\gamma \rho, \gamma_{\max}\}$. Otherwise, the stepsize is reduced as $\gamma \leftarrow \max\{\gamma \theta, \gamma_{\min}\}$, until the condition is satisfied or $\gamma_{\min}$ is reached; in the latter case, the iterate is not updated. Note that \cite[Algorithm 3.1]{Cartis2018} can be recovered as a special case of Algorithm \ref{algo:ffd} with $\rho = 1/\theta$ and $\gamma_{\min} = 0$ . In the next section, we review and propose several strategies to construct structured direction matrices.

\subsection{Towards Efficient Structured Direction Generation}\label{sec:struct_directions}

Several works have proposed the use of structured direction matrices for finite-difference methods; see, e.g., \cite{kozak2021zeroth,rando2022stochastic,rando2023optimal,str_zo_applied,stief_zth,stief_zeroth,Berahas2022}. In this section, we review these approaches and propose several extensions. We denote by $I_{d,\ell} \in \mathbb{R}^{d \times \ell}$ the truncated identity matrix, and by $\tilde{I}_{d,\ell} \in \mathbb{R}^{d \times \ell}$ a random truncated permutation i.e. a matrix constructed by sampling $\ell$ columns from the identity matrix $I \in \mathbb{R}^{d \times d}$ without replacement. 

\paragraph*{QR \& Coordinate methods.} One strategy proposed in \cite{kozak2021zeroth,rando2022stochastic} constructs a structured direction matrix at each iteration $k\in \mathbb{N}$ by sampling a Gaussian matrix $A^{(k)} \in \mathbb{R}^{d \times \ell}$ with i.i.d. entries drawn from $\mathcal{N}(0,1)$, computing its QR factorization $A^{(k)} = Q^{(k)} R^{(k)}$, and setting $P_k = Q^{(k)}$. %
With slight modifications, this construction can yield a random matrix distributed according to the Haar measure \cite{mezzadri2006generate}. The method incurs a per-iteration computational cost of $\mathcal{O}(d \ell^2)$. Another strategy proposed in \cite{kozak2021zeroth,rando2022stochastic} uses a truncated random permutation matrix as the direction matrix, i.e., for every $k \in \mathbb{N}$, it set $P_k = \tilde{I}_{d,\ell}^{(k)}$. When $\ell = d$, this recovers the deterministic estimator of \cite{Berahas2022}. %

\paragraph*{Butterfly Directions.} An efficient method for constructing structured directions proposed in \cite{rando2023optimal}, leverages on Butterfly matrices \cite{TROGDON201948}. When $d = 2^n$ for some $n \geq 0$, a Butterfly matrix $G \in \mathbb{R}^{d\times d}$ is recursively defined as%
\begin{equation*}
G^{(n)} = \begin{bmatrix}
\cos \theta_n G^{(n-1)} & \sin \theta_n G^{(n-1)} \\
-\sin \theta_n G^{(n-1)} & \cos \theta_n G^{(n-1)}
\end{bmatrix}, \quad G^{(0)} = [1],
\end{equation*}
where $\theta_n \sim \mathcal{U}([0, 2\pi])$. At every iteration $k\in \mathbb{N}$, thus we can construct structured direction matrices as $P_k = G_k^{(n)} I_{d,\ell}$. This construction incurs a computational cost of $\mathcal{O}(d \log d)$ per iteration. However, it can be constructed only when $d$ is a power of two. %
To address this limitation, we propose to construct $G_k^{(n)}$ for the largest $n$ such that $2^n < d < 2^{n+1}$, then pad the matrix with rows and columns of the identity and sample $\ell$ random columns without replacement. %

\paragraph*{(Permuted) Householder Directions.} In \cite[Appendix D]{rando2023optimal}, a method that uses a random truncated Householder reflector is proposed. At each iteration $k$, a random vector $v_k$ is sampled uniformly from the unit sphere, and the direction matrix is constructed as $P_k = (I - 2 v_k v_k^\top) I_{d,\ell}$. This method is memory-efficient, as we only need to store the vector $v_k$ in memory, and it incurs a computational cost of $\mathcal{O}(d \ell)$ (since we only need to compute $\ell$ columns of the outer product). However, in our experiments, we observed that this strategy results in poor convergence performance when $\ell < d$. To address this, we propose a variant that improves performance without increasing the computational complexity by replacing the truncation with a random truncated permutation. Thus, at each iteration $k$, the direction matrix is constructed as $P_k = (I - 2 v_k v_k^\top) \tilde{I}_{d,\ell}^{(k)}$. 
This variant retains low computational overhead and, as shown in Section \ref{sec:results}, achieves significantly better performance when $\ell \ll d$.

\paragraph*{Other Methods.} In \cite{stief_zth, stief_zeroth}, the authors propose structured finite-difference algorithms where, at each iteration $k$, a Gaussian matrix $A_k \in \mathbb{R}^{d \times \ell}$ is sampled, and the direction matrix is computed as $P_k = A_k (A_k^\top A_k)^{-1/2}$. These matrices are structured, as they lie on the Stiefel manifold \cite{chikuse2012statistics}. However, this approach is computationally expensive, incurring a per-iteration cost of $\mathcal{O}(d \ell^2)$. In \cite{str_zo_applied}, a method that constructs structured direction matrices as the product of Rademacher-Hadamard matrices is proposed and used. While this approach is computationally efficient, it shares the same limitation as Butterfly directions - it can only be applied when $d$ is a power of 2. To address this, the authors propose padding the direction matrices with zeros, but this can lead to stagnation as some coordinates remain unchanged.

\subsection{Preliminary Observations}\label{sec:preliminary_obs}
In this section, we discuss the expected outcomes based on both empirical observations and theoretical findings in the literature. According to theoretical analyses of finite-difference methods \cite{nesterov2017random, kozak2021zeroth, rando2022stochastic}, the use of structured directions does not affect the dominant terms in the convergence rate but they typically influence the gradient approximation error and lower-order terms. As a motivating example, consider directions $(p^{(i)})_{i =1}^\ell$ sampled from the unit sphere. If $p^{(i)}$ are i.i.d then the expected gradient approximation error is
\begin{equation}\label{eqn:approx_error_sph}
\mathbb{E}[\| g(x, h, P) - \nabla F(x) \|^2] = \mathbb{E}[\| g(x, h, P) \|^2] + \|\nabla F(x)\|^2 + C_1(x, h),
\end{equation}
where $x \in \mathbb{R}^d$, $h > 0$, $C_1(x,h)$ is a constant depending on $x$ and $h$. In this case, we have that
\begin{equation*}
\mathbb{E}[\| g(x, h, P) \|^2] = \frac{d^2}{\ell^2 h^2} \left[ \sum_{i = 1}^\ell \mathbb{E}(F(x + h p^{(i)}) - F(x))^2 + \ell(\ell - 1) C_2(x, h) \right],
\end{equation*}
where $C_2(x, h) \geq 0$ is a constant capturing the interaction between directions. If $p^{(i)}$ are orthogonal directions sampled according to the Haar measure \cite{mattila_1995} then %
we can show that we get a smaller approximation error
\begin{equation*}
\mathbb{E}[\| g(x, h, P) - \nabla F(x) \|^2] = \frac{d^2}{\ell^2 h^2} \sum_{i = 1}^\ell \mathbb{E}(F(x + h p^{(i)}) - F(x))^2 + \|\nabla F(x) \|^2 + C_1(x, h).
\end{equation*}
Formal derivations and additional discussion are provided in Appendix~\ref{app:str_unstr}.
This variance reduction can result in more stable optimization trajectories and faster empirical convergence \cite{choromanski2019unifying, Berahas2022}.
Moreover, small variance is essential for line-search procedures. As shown in \cite{Cartis2018}, finite-difference methods that incorporate line-search require the gradient surrogate to be a sufficiently accurate approximation of the true gradient with high probability in order to guarantee convergence. The work \cite{Berahas2022} establishes that such a high-probability accuracy condition is satisfied by unstructured estimators when the number of directions exceeds the dimensionality, i.e., $\ell > d$. Moreover, the same condition holds for estimators based on $\ell = d$ coordinate directions or interpolation-based schemes. The latter refers to constructing the gradient surrogate by fitting a local linear model over a set of interpolation points. Notice that when $\ell = d$ and the directions are orthogonal, such a surrogate coincides with eq. \eqref{eqn:forward_fd}, which enables extending the results of \cite{Berahas2022} to structured estimators using orthogonal directions. However, no similar theoretical guarantees have been established in the setting where $\ell < d$. In this regime, the most closely related results are those of \cite{kozak2021zeroth,rando2022stochastic}, which show that it is possible to bound the error between the structured gradient surrogate and the projection of the true gradient onto the span of the direction matrix, i.e., $\|g(x, h, P) - P P^\top \nabla F(x)\|$. These results suggest that, for sufficiently large $\ell < d$, structured directions can still yield reliable approximations.
Moreover, the variance can be further reduced by increasing the number of directions $\ell$. %

\section{Experimental Setting}\label{sec:perf_metrics}

In this section, we describe the performance metrics and the benchmark functions used in our ablation study. We implement Algorithm \ref{algo:ffd} and investigate how the choice of the direction matrix influences the overall optimization performance. 
We consider a diverse set of direction generation procedures. For unstructured methods, we consider the three most common methods described in Section \ref{sec:prob_setup}: Gaussian, spherical and Rademacher while for structured methods, we consider the QR method, the random coordinate strategy, the single random Householder reflector, the random permuted Householder method and the Butterfly method with permutation discussed in Section \ref{sec:struct_directions}.

\subsection{Performance Metrics}
\label{sec:metrics}
We assess performance across three main criteria: the computational cost of generating direction matrices, the gradient approximation error, and the function value progress during optimization.

\paragraph*{Computational Cost.}
We measure the time required to construct the direction matrix $P \in \mathbb{R}^{d \times \ell}$ for each method, under the assumption that sufficient memory is available to store the full matrix. Our aim is to show that, even when generated explicitly, several structured methods incur a computational cost comparable to unstructured ones. Moreover, many structured schemes can be implemented in a memory-efficient manner, making them suitable for high-dimensional settings. We emphasize that the focus of this work is not on implementing large-scale structured finite-difference methods, but rather on empirically evaluating the benefits of imposing structure in the choice of directions. The results of this study motivates the development of scalable implementations of such methods. %

\paragraph*{Gradient Approximation Error.}
Let $x \in \mathbb{R}^d$ be such that $\nabla F(x) \neq 0$, and let $h > 0$. For a direction matrix $P \in \mathbb{R}^{d \times \ell}$, we define the (relative) gradient approximation error $E(x, h, P)$ as
\begin{equation*}
    E(x, h, P) := \frac{\| g(x, h, P) - \nabla F(x) \|}{\|\nabla F(x)\|},
\end{equation*}
where $g(x, h, P)$ is the gradient surrogate defined in eq. \eqref{eqn:forward_fd}. This metric quantifies the fidelity of the gradient estimator and has been used in prior work \cite{Berahas2022}.

\paragraph{Function Value Progress.}
To study the impact of the direction generation strategies on optimization performance, we measure the (normalized) function value progress over time. Let $x_0 \in \mathbb{R}^d$ be the initial guess, and let $x_k \in \mathbb{R}^d$ be the iterate produced by iteration \ref{eqn:random_fd}. We define the progress metric as
\begin{equation*}
    V(x_k, x_0) := \frac{F(x_k) - \min F}{F(x_0) - \min F}.
\end{equation*}
In experiments where $\min F$ is not known, we replace it with the smallest function value observed across all methods. %

\paragraph*{Performance Summaries.}
To summarize performance across a large collection of problems, we use the fraction of problems solved as a comparative metric across different problem classes, based on the criteria introduced previously. Specifically, for evaluating gradient approximation quality, we consider a problem to be "solved" if the expected relative gradient approximation error falls below a user-specified threshold $\tau > 0$. Similarly, to evaluate optimization performance, we consider a problem "solved" if the expected relative reduction in function value measured at the best iterate, (i.e., the iteration at which the lowest function value is obtained) is below a threshold $\tau$. Formally, let $n \in \mathbb{N}$ denote the total number of problems in the test suite $\{F_i\}_{i=1}^n$. Then, the performance metrics are defined as
\begin{equation*}
\rho_\text{grad}(\tau) := \frac{1}{n} \sum_{i = 1}^n 1_{\mathbb{E}_P \left[ \frac{\| g(x_i, h, P) - \nabla F_i(x_i) \|}{\|\nabla F_i(x_i)\|} \right] \leq \tau} \qquad \rho_\text{val}(\tau) := \frac{1}{n} \sum_{i = 1}^n 1_{ \mathbb{E}_P \left[ \frac{F_i(x_k^i) - \min F_i}{F_i(x_0^i) - \min F_i} \right] \leq \tau},
\end{equation*}
where $x_i$ denotes the evaluation point for the $i$-th problem, $x_0^i$ and $x_k^i$ denote the initial and best-found points respectively, and $\min F_i$ is an estimate of the minimal value of the $i$-th objective function. The expectation over $P$ is approximated using a fixed number of direction matrix samples. Similar methods to summarize results were used and proposed in \cite{more_wild,dolan_more}. %

\subsection{Benchmark Problems}
To evaluate the performance of structured versus unstructured directions in finite-difference methods, we consider three distinct types of benchmark problems: synthetic functions, real-world optimization problems from the CUTEst collection, and a high-dimensional adversarial attack task on image classification. The synthetic functions we considered are commonly used for benchmarking zeroth-order optimization methods and they are the Least squares function, Qing Function and Rosenbrock function. We fix the input dimension $d = 500$ for all synthetic tests and use randomly generated inputs with a consistent seed to ensure reproducibility. To assess performance on more diverse and practical problems, we select a subset of the unconstrained optimization problems from the CUTEst benchmark suite \cite{Gould2015} - see Appendix \ref{app:exp_details} for additional details. The selected problems span input dimensions ranging from $6$ to $1000$ and have no constraints or discontinuities. For each problem, we use the standard initial point provided by the benchmark and run the algorithms for $10000$ function evaluations. Finally, we evaluate direction strategies on a high-dimensional, real-world task of generating adversarial examples for a multiclass classifier trained on MNIST \cite{mnist_dataset}. Such a classifier is a convolutional neural network (CNN) trained using defensive distillation~\cite{defensive_distillation}. The attack is cast as an optimization problem where the objective is to perturb the input image to induce misclassification while minimizing the perturbation norm. Formally, let $f_w : \mathbb{R}^d \rightarrow \mathbb{R}^C$ be a classifier parameterized by weights $w$, mapping an input image $z \in \mathbb{R}^d$ to a probability distribution over $C$ classes via the softmax output. We consider a fixed input-label pair $(z, y)$ from the MNIST test set correctly classified by the network $f_w$, where $z \in \mathbb{R}^d$ is the original image (normalized to be in $[-0.5, 0.5]^d$) and $y \in \{1, \ldots, C\}$ is the true label. We adopt the same
black-box attacking loss as in \cite{ji_var_red,zoo,liu_svr}
\begin{equation*}
    \min\limits_{x \in \mathbb{R}^d} F(x) := \max\left\{ \log f_w(\psi(x, z) )_y - \max\limits_{j \neq y} \log f_w(\psi(x, z))_j, -\kappa \right\} + \frac{\lambda}{2} \|\psi(x, z) - z\|^2,
\end{equation*}
where $f_w(\cdot)_j$ denotes the predicted probability for class $j$, $\lambda > 0$ is a regularization parameter and $\psi$ is the manipulation function defined as $\psi(x, z) = 0.5 \tanh{(\tanh^{-1}(2z) + x)}$.
For this experiment, we evaluate each method under a fixed budget of $30000$ function evaluations, aimed at generating adversarial perturbations for $100$ MNIST images randomly sampled from the test set. Performance is reported in terms of the fraction of problems successfully solved, where each problem corresponds to the minimization of the attack loss for a distinct image. Full experimental details, including the model architecture, are provided in Appendix~\ref{app:exp_details}.

\section{Results}\label{sec:results}

In this section we perform our ablation study. We first analyze the computational cost of generating direction matrices, then assess gradient approximation accuracy and function value progress. Experiments span both synthetic and real-world problems to highlight the practical benefits of structured directions.

\paragraph*{Computational Cost.} We evaluate the computational cost of generating structured and unstructured direction matrices across varying numbers of directions $\ell$ and input dimensions $d$. In Figure \ref{fig:time_cost}, we report the mean and standard deviation of the time required to generate such matrices using $500$ repetitions. The result indicates that, except for QR method, structured generation schemes considered achieve comparable time to unstructured  methods, even in high-dimensional regimes. Notably, for coordinate directions with $\ell = d$, the identity matrix can be precomputed and reused throughout the algorithm. This eliminates the need to generate new matrices at each iteration, leading to a substantial reduction in per-iteration overhead.
\begin{figure}[H]
    \centering
    \includegraphics[width=0.9\linewidth]{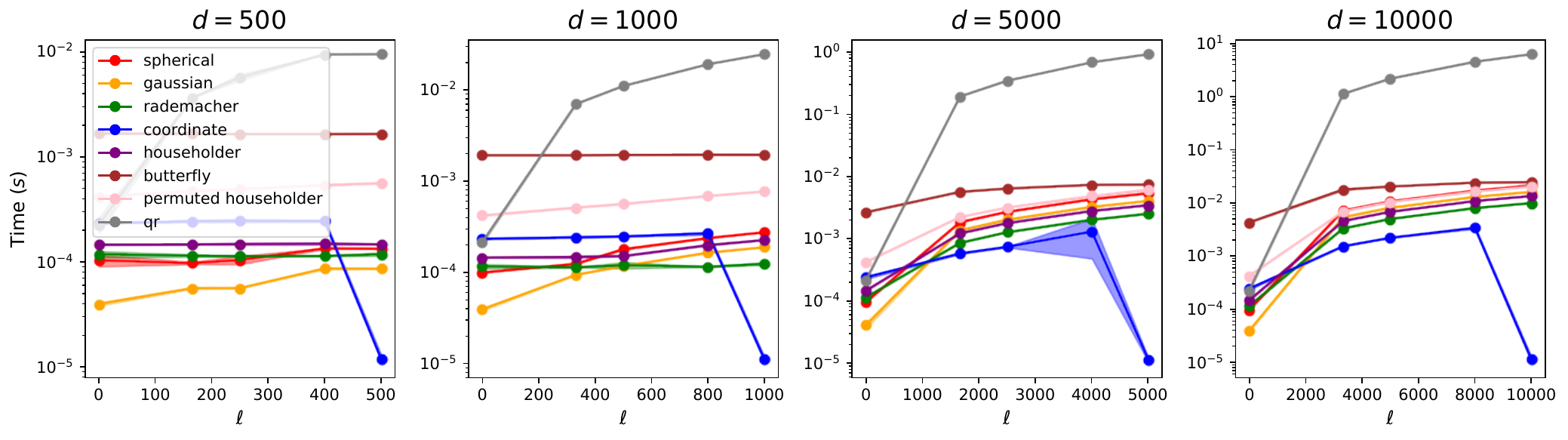}
    \caption{Time cost for constructing direction matrices.}
    \label{fig:time_cost}
\end{figure}
\paragraph*{Gradient Approximation Error.} We compare the gradient approximation error using the gradient surrogate defined in eq. \eqref{eqn:forward_fd}. In Figure \ref{fig:synthetic_functions}, we report the relative error in computing the gradient approximation for multiple synthetic test functions, over $50$ trials. The input dimension is fixed at $d = 500$, the finite-difference discretization parameter sequence is constant and it is set to $h_k = 10^{-7}$, and the number of directions $\ell$ is selected as a fraction of $d$. For $\ell \geq d/3$, structured methods provide better performance than the unstructured ones consistently and, in particular, when $\ell = d$, structured estimators outperform their unstructured counterparts. These results extend the findings of \cite{Berahas2022}, which demonstrated similar improvements primarily for coordinate-based or interpolation-based schemes in setting $\ell \geq d$. This improvement can be attributed to the lower variance induced by structured directions, which tend to span the space more uniformly and avoid the redundancy often present in random directions. This can be also justified theoretically by observing that for $\ell > 1$, the approximation error for structured direction estimator is smaller than the unstructured. In Figure \ref{fig:grad_acc_cutest}, we further validate these trends on the subset of CUTEst benchmark problems, reporting the fraction of solved problems for each method changing the accuracy threshold $\tau$. A method is better than another if it solves an higher fraction of problems for a smaller $\tau$ which indicates, in this case, the gradient accuracy. We can observe that for $\ell = d/3$, structured methods provide slightly better performance. Moreover, for $\ell \geq d/2$, structured estimators exhibit evident higher performance and for $\ell =d$ structured methods outperform unstructured ones. %

\begin{figure}[H]
    \centering
    \includegraphics[width=0.9\linewidth]{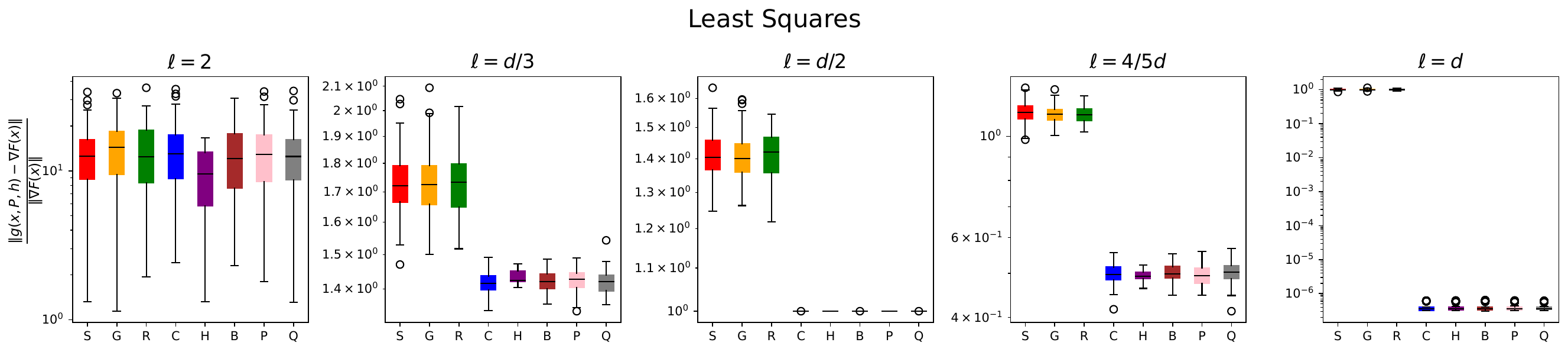}
    \includegraphics[width=0.9\linewidth]{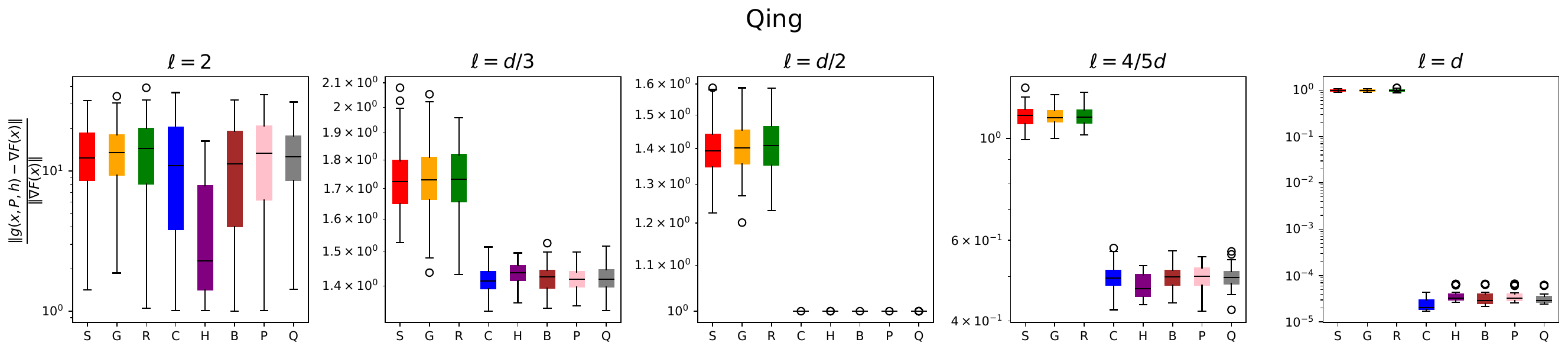}
    \includegraphics[width=0.9\linewidth]{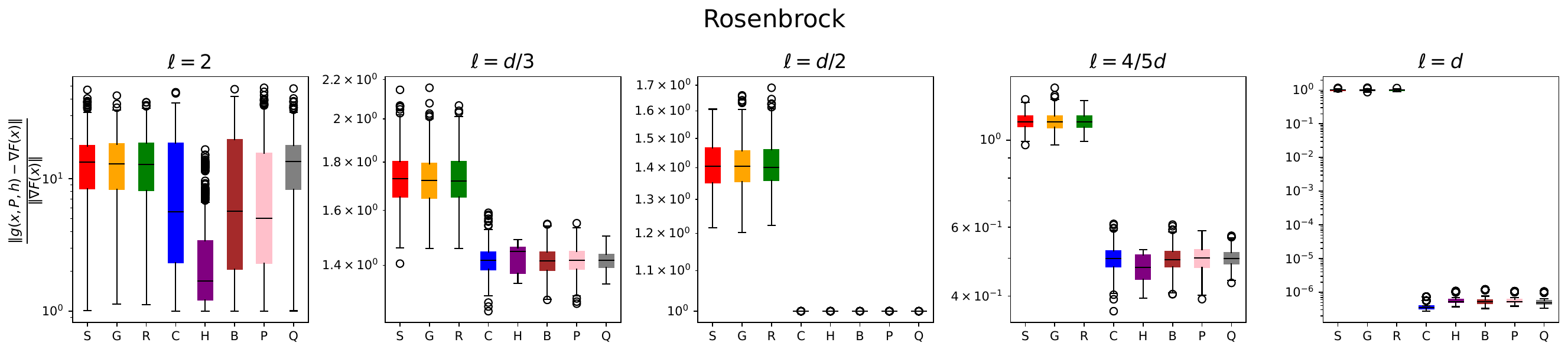}
    \caption{ Relative gradient approximation error for the Least Squares, Qing, and Rosenbrock functions using the surrogate in Eq.~\eqref{eqn:forward_fd}, with direction matrices generated by: S (Spherical), G (Gaussian), R (Rademacher), C (Coordinate), H (Householder), B (Butterfly), P (Permuted Householder), Q (QR).
    }
    \label{fig:synthetic_functions}
\end{figure}
\begin{figure}[H]
    \centering
    \includegraphics[width=0.9\linewidth]{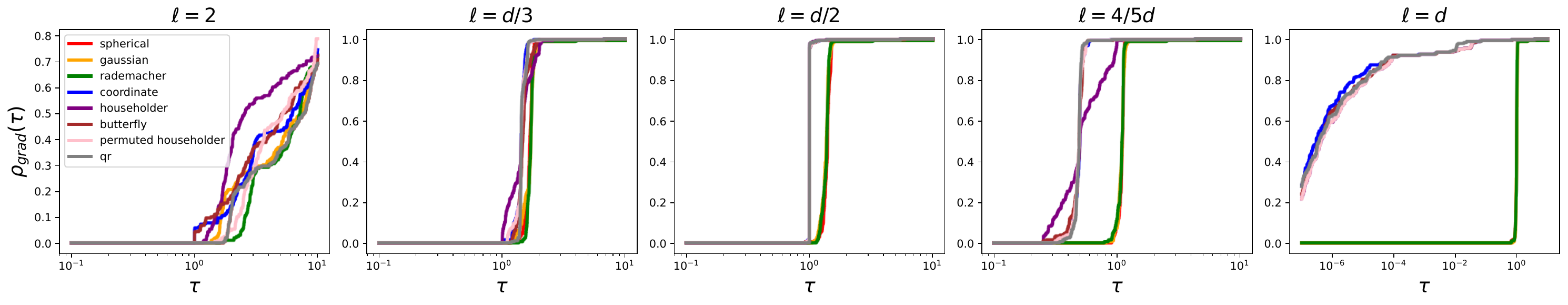}
    \caption{Fraction of solved problems for gradient approximation error on subset of CUTEst benchmark.}
    \label{fig:grad_acc_cutest}
\end{figure}

\paragraph*{Convergence.} 
Here, we compare the performance of Algorithm \ref{algo:ffd} in optimizing several synthetic objective functions using different methods for constructing direction matrices. %
We fix a total budget of $10000$ function evaluations, and terminate each run once this budget is exceeded. Each experiment is repeated $10$ times, and we report the mean and standard deviation of the objective values at termination. Results are summarized in Figure~\ref{fig:ls_conv}. Our findings show that, with the exception of the standard Householder method, structured approaches consistently performs better than unstructured ones when $\ell \geq d/3$. For $\ell = d/3$, structured methods yield performance that is comparable to (or slightly better than) their unstructured counterparts. The advantage becomes more pronounced when $\ell > d/2$, reflecting the improved gradient accuracy observed in earlier sections. Interestingly, for $\ell = d$, performance slightly degrades compared to $\ell = d/2$ on some functions. This can be attributed to the fixed budget: when $\ell = d$, each iteration is more expensive, resulting in fewer total iterations and updates within the evaluation limit. We observe also that for $\ell > 2$, structured methods performs better than the case $\ell = 2$. This is likely due to the use of multiple directions, which yields more accurate gradient approximations and enables the line search to select larger step sizes. This effect is supported by theoretical insights \cite{rando2022stochastic, rando2023optimal}, which show that to guarantee convergence the stepsize depends on the number of directions. Finally, when considering the computational cost of each method (as discussed in the previous section), it is worth noting that using multiple directions can also be more efficient in practice (in particular when $d$ is not too large). Although the per-iteration cost increases with $\ell$, methods that rely on only a few directions often require more iterations with less informative updates. Moreover, once the direction matrix $P$ is generated, the associated function evaluations can be parallelized, thereby mitigating the computational overhead.%
\begin{figure}[t]
    \centering
    \includegraphics[width=0.9\linewidth]{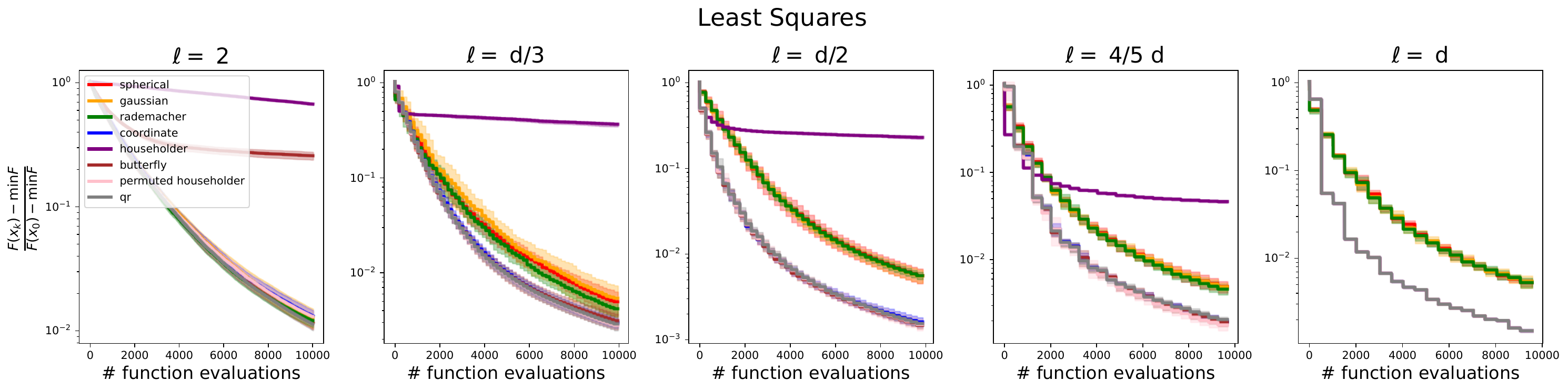}
    \includegraphics[width=0.9\linewidth]{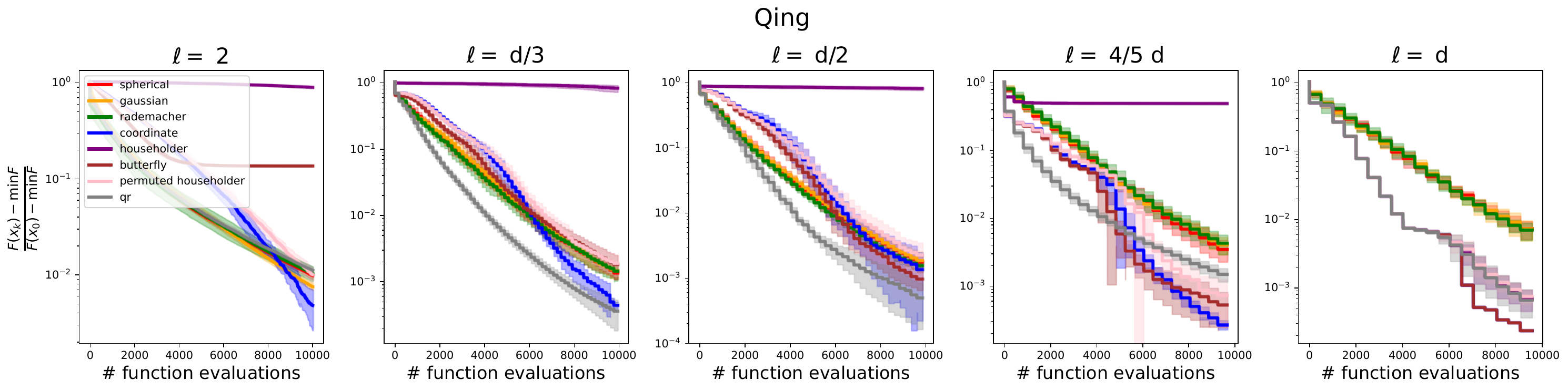}
    \includegraphics[width=0.9\linewidth]{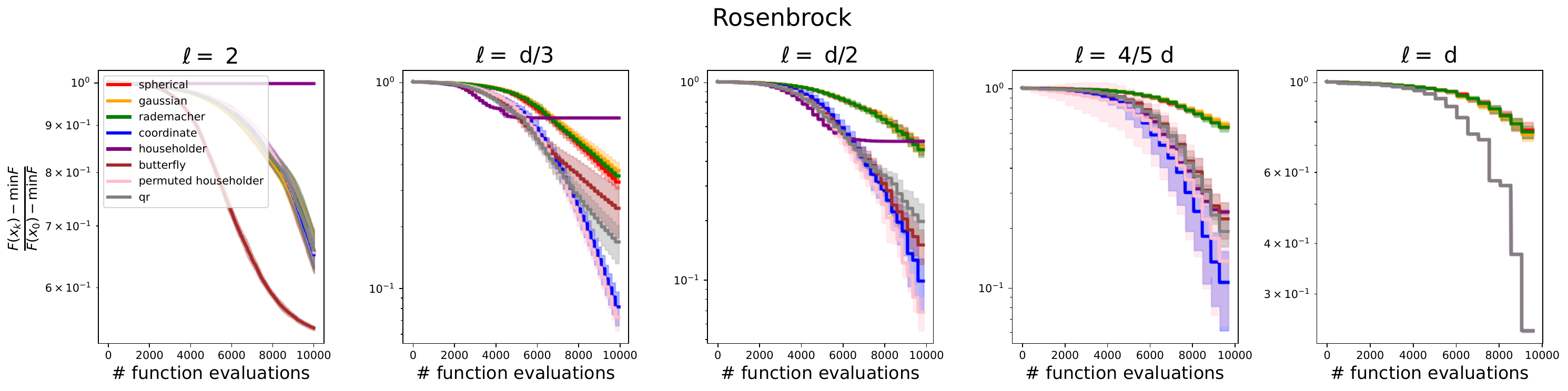}
    \caption{Function value progress in optimizing Least-square, Qing and Rosenbrock functions.}
    \label{fig:ls_conv}
\end{figure}
\begin{figure}[t]
    \centering
    \includegraphics[width=0.9\linewidth]{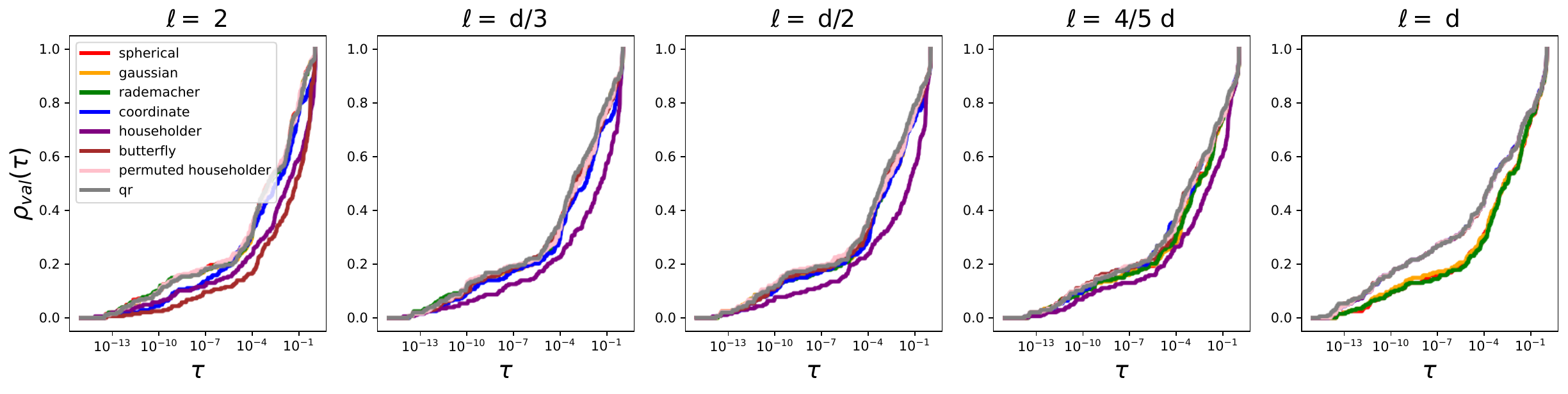}\\
    \includegraphics[width=0.9\linewidth]{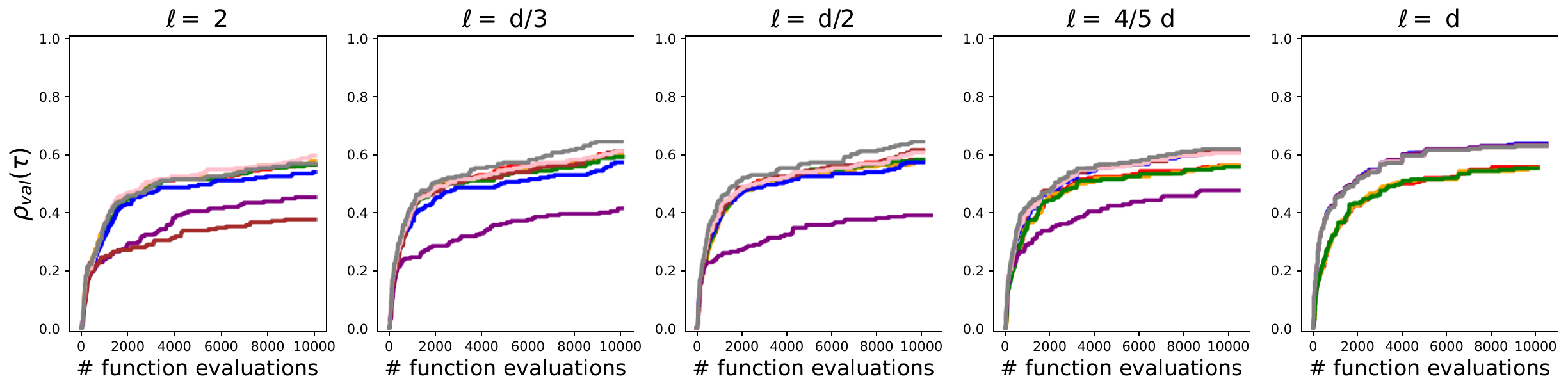}
    \caption{Top row: Fraction of problems solved as a function of accuracy threshold $\tau$.
Bottom row: Fraction of problems solved at fixed accuracy $\tau = 10^{-2}$ versus the function evaluation budget.}
    \label{fig:frac_solved_cutest_conv}
\end{figure}
\noindent In the first row of Figure \ref{fig:frac_solved_cutest_conv}, we show the fraction of problems solved for the CUTEst benchmark. We observe that, in general, with the exception of the single Householder method, all approaches solve approximately the same fraction of problems. Notably, for $\ell = 2$, the coordinate and butterfly direction methods exhibit worse performance compared to the others. This behavior may be due to structural limitations in the directions used. In particular, coordinate directions can be ineffective when the objective function is insensitive to certain variables (i.e., when some dimensions have little or no influence on the function value) resulting in search directions that contribute minimally to progress. Similarly, butterfly directions are affected by padding: since the domain dimension is not necessarily a power of two, additional rows and columns from the identity matrix are introduced, as described in Section \ref{sec:struct_directions}. In the second row of Figure \ref{fig:frac_solved_cutest_conv}, we fix the accuracy level $\tau = 10^{-2}$ and we plot the fraction of solved problem per number of function evaluations.  We can observe that for $\ell \geq d/2$ (except for Householder and Coordinate), structured methods provide an higher fraction of solved problems before the unstructured methods, suggesting that such methods allow to reach faster an approximate solution as observed in the synthetic experiments (Figure \ref{fig:ls_conv}). For higher values of $\ell$ the result becomes more pronounced, and for $\ell = d$ every structured methods performs better than unstructured ones. 
In Figure \ref{fig:fvals_adv}, we report the fraction of problems solved with a threshold $\tau = 0.5$ for the adversarial perturbation task. The results show that finite-difference methods with structured directions consistently outperform their unstructured counterparts when $\ell \geq d/3$. Notably, for $\ell = 2$, none of the methods succeed in solving any of the instances under the given criteria. This does not imply that it is impossible to generate a misclassifying adversarial perturbation with $\ell = 2$ and a budget of $30000$ function evaluations; rather, it suggests that, with these budget and parameters, the methods fail to produce a solution whose function value is sufficiently close to the best observed across all trials and thus, an higher budget could be required. %

\begin{figure}[H]
    \centering
    \includegraphics[width=0.9\linewidth]{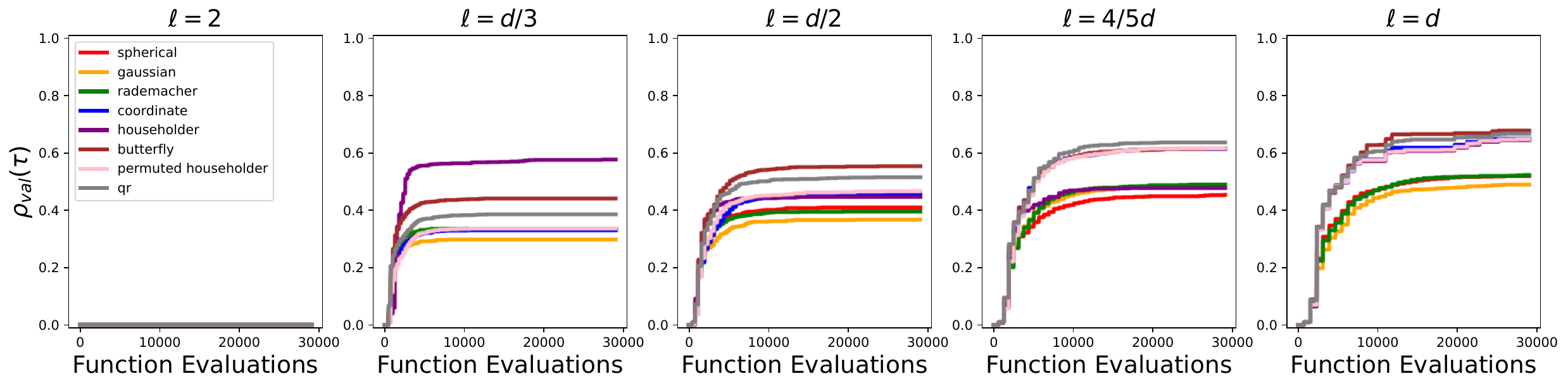}
    \caption{Fraction of solved problems with accuracy $\tau = 0.5$}
    \label{fig:fvals_adv}
\end{figure}

\section{Conclusions}\label{sec:conclusion}
In this work, we investigated the impact of structured versus unstructured directions in finite-difference optimization. We introduced several structured strategies and compared them to common unstructured methods. Our results show that structured directions can match the efficiency of unstructured ones while yielding more accurate gradient estimates, leading to faster convergence and more accurate solutions when used in Algorithm \ref{algo:ffd}. This work highlights the value of structure in direction matrices and opens several avenues for future research. These include theoretical analysis of Algorithm~\ref{algo:ffd} with structured surrogates, extending the study to other estimators such as central differences or interpolation-based methods \cite{Berahas2022}, and exploring applications of structured directions to memory-efficient finite-difference algorithms for large language model (LLM) fine-tuning such as \cite{mezo}.

\paragraph*{Acknowledgments.} L. R. acknowledges the financial support of the European Commission (Horizon Europe grant ELIAS 101120237), the Ministry of Education, University and Research (FARE grant ML4IP R205T7J2KP) and the Center for Brains, Minds and Machines (CBMM). S. V. acknowledges the support of the European Commission (grant TraDE-OPT 861137). L. R. and S. V. acknowledge the financial support of the Ministry of Education, University and Research (grant BAC FAIR PE00000013 funded by the EU - NGEU). M. R., L. R. and S. V. acknowledge the financial support of the European Research Council (grant SLING 819789). L. R., S. V. and C. M. acknowledge the financial support of the US Air Force Office of Scientific Research (FA8655-22-1-7034). The research by S. V. and C. M. has been supported by the MUR Excellence Department Project awarded to Dipartimento di Matematica, Universita di Genova, CUP D33C23001110001 and MIUR (PRIN 202244A7YL). M. R., C. M. and S. V. are members of the Gruppo Nazionale per l’Analisi Matematica, la Probabilità e le loro Applicazioni (GNAMPA) of the Istituto Nazionale di Alta Matematica (INdAM). This work represents only the view of the authors. The European Commission and the other organizations are not responsible for any use that may be made of the information it contains.

\bibliographystyle{plain}
\bibliography{bibliography}

\appendix
\newpage

\section{Experimental Details}\label{app:exp_details}
In this appendix, we provide additional details on the experimental setup. All scripts were implemented in Python 3 (version 3.10.11) using the following libraries: NumPy (version 1.24.3) \cite{harris2020array}, Matplotlib (version 3.5.1) \cite{matplotlib}, and PyTorch (version 2.0.1) \cite{pytorch}. For the optimization benchmarks, we employed the PyCUTEst interface \cite{PyCUTEst2022} to access a subset of the CUTEst test functions. The adversarial perturbation experiments were conducted on the MNIST dataset \cite{mnist_dataset}.

\paragraph*{Machine used to perform the experiments.} In the following table, we describe the features of the
machine used to perform the experiments in Section \ref{sec:results}.
\begin{table}[H]
    \centering
    \caption{Machine used to perform the experiments}\label{tab:machine}
    \begin{tabular}{l l}
        \toprule
        Feature & \\ 
        \midrule
         OS & Ubuntu 18.04.1\\
         RAM & 256 GB \\
         GPU(s)& 1× Quadro RTX 6000 (24 GB)\\
         CUDA version & 11.7\\
         \bottomrule
    \end{tabular}
\end{table}

\subsection{Synthetic \& CUTEst experiment details}
In this appendix, we report the details of the synthetic and CUTEst benchmark experiments performed in Section~\ref{sec:results}.

\paragraph*{Objective Functions.} We evaluated algorithmic performance on three standard synthetic functions: Least Squares, Qing, and Rosenbrock, whose definitions are reported in Table~\ref{tab:synt_functions}. In particular, the Least Squares function is given by
\begin{equation*}
F(x) = \frac{1}{2} \|Ax - y\|^2,
\end{equation*}
where $x \in \mathbb{R}^d$, $A \in \mathbb{R}^{d \times d}$, $x^* \sim \mathcal{N}(0, I)$, and $y = Ax^*$. The matrix $A$ is constructed to control the conditioning of the problem. Specifically, we generate a random Gaussian matrix $\bar{A}$ with i.i.d. entries from $\mathcal{N}(0, 1)$, compute its QR factorization $\bar{A} = QR$, and define $A = Q S Q^\top$, where $S$ is a diagonal matrix with entries linearly spaced between $\sqrt{\mu}$ and $\sqrt{L}$. This construction ensures that $F$ is $\mu$-strongly convex and has an $L$-Lipschitz continuous gradient. In all experiments of Section \ref{sec:results}, we set $L = 10^4$ and $\mu = 1$, and the dimension of the input space is $d=500$. For both convergence and gradient approximation experiments, we use $x_0 = [1, \dots, 1]$ for the Least Squares and Qing functions, and $x_0 = [0.5, \dots, 0.5]$ for the Rosenbrock function. 

\noindent For the experiments on the CUTEst test suite, we selected a subset of unconstrained problems with input dimensions ranging from $6$ to $1000$. For each problem, the initial condition $x_0 \in \mathbb{R}^d$ was set using the default initialization provided by the PyCUTEst library~\cite{PyCUTEst2022}. The selected problems are listed in Table~\ref{tab:cutest_problems}, grouped by function name according to the CUTEst classification, along with the corresponding input dimensions used in our experiments.

\begin{table}[H]
    \centering
    \caption{Definitions of the synthetic benchmark functions used in the experiments.}
    \label{tab:synt_functions}
    \begin{tabular}{ll}
        \toprule
         Name & Definition \\
         \midrule
         Least Squares & $F(x) =\frac{1}{2}\|Ax - y\|^2$ \\
         Qing & $F(x) =\sum\limits_{i = 1}^d (x_i^2 - i)^2$\\
         Rosenbrock & $F(x) = \sum\limits_{i = 1}^{d- 1} 100(x_{i + 1} - x_i^2)^2 +(x_i - 1)^2$\\
         \bottomrule
    \end{tabular}

\end{table}

\begin{table}[H]
\centering
\caption{Names and input dimensions ($d$) of the selected problems from the CUTEst benchmark suite.} \label{tab:cutest_problems}
\begin{tabular}{llll}
	\toprule
	Problem & $d$ & Problem & $d$ \\
	\midrule
	ARGLINA & $10, 50, 100, 200$ & INDEF & $10, 50, 100, 1000$ \\
	ARGTRIGLS & $10, 50, 100, 200$ & INDEFM & $10, 50, 100, 1000$ \\
	ARWHEAD & $100, 500, 1000$ & INTEQNELS & $12, 52, 102, 502$ \\
	BDQRTIC & $100$ & LIARWHD & $100$ \\
	BOX & $10, 100, 1000$ & MOREBV & $10, 50, 100, 500, 1000$ \\
	BOXPOWER & $10, 100, 1000$ & NCB20 & $110$ \\
	BROWNAL & $10$ & NCB20B & $50, 100, 180, 500, 1000$ \\
	BROYDN3DLS & $10, 50, 100, 1000$ & NONCVXU2 & $10$ \\
	BROYDNBDLS & $10, 50, 100, 500, 1000$ & NONCVXUN & $10$ \\
	BRYBND & $10, 50, 100, 500, 1000$ & NONDIA & $10, 20, 30, 50, 90, 100$ \\
	CHNROSNB & $10, 50$ & NONDQUAR & $100, 500, 1000$ \\
	CHNRSNBM & $10, 50$ & OSCIPATH & $10, 100, 500$ \\
	COSINE & $10, 100, 1000$ & PENALTY2 & $10$ \\
	CURLY10 & $100, 1000$ & POWELLSG & $8, 20, 40, 60, 80, 100, 500, 1000$ \\
	CURLY20 & $100, 1000$ & POWER & $10, 20$ \\
	CURLY30 & $100, 1000$ & SBRYBND & $10, 50, 100, 500, 1000$ \\
	DIXON3DQ & $10, 100, 1000$ & SCHMVETT & $10, 100, 500, 1000$ \\
	DQDRTIC & $10, 50$ & SCOSINE & $10, 100, 1000$ \\
	DQRTIC & $10$ & SENSORS & $10, 100$ \\
	EIGENALS & $6, 110$ & SINQUAD & $50, 100, 500, 1000$ \\
	EIGENBLS & $6, 110$ & SINQUAD2 & $50, 100, 500, 1000$ \\
	ENGVAL1 & $50, 100$ & SPARSINE & $10, 50, 100$ \\
	ERRINROS & $10$ & SPARSQUR & $10, 50, 100$ \\
	ERRINRSM & $10$ & SPIN2LS & $6, 8, 12, 22, 42, 62, 82, 102$ \\
	EXTROSNB & $10, 100$ & SPINLS & $7, 11, 22, 67, 232, 497, 862$ \\
	FLETBV3M & $10, 100, 1000$ & SSBRYBND & $10, 50, 100, 500, 1000$ \\
	FLETCBV2 & $10, 100, 1000$ & SSCOSINE & $10, 100, 1000$ \\
	FLETCBV3 & $10, 100, 1000$ & STRTCHDV & $10, 100, 1000$ \\
	FLETCHBV & $10, 100, 1000$ & TOINTGSS & $10, 50, 100, 500, 1000$ \\
	FLETCHCR & $10, 100, 1000$ & TQUARTIC & $10, 50, 500, 1000$ \\
	FREUROTH & $10, 50, 100$ & TRIDIA & $10, 20, 30, 50, 100$ \\
	GENROSE & $10, 100, 500$ & TRIGON1 & $10, 100$ \\
	HILBERTA & $6, 10$ & TRIGON2 & $10, 100, 1000$ \\
	HILBERTB & $10, 50$ &  &  \\
	\bottomrule
\end{tabular}
\end{table}

\paragraph*{Optimization parameter settings.} For all experiments on synthetic functions, we run Algorithm~\ref{algo:ffd} using the same line-search configuration across all methods to ensure fair comparison. Specifically, we adopt an initial step size of $\gamma_0 = 1.0$, and fix the Armijo condition constant at $c = 10^{-7}$. The minimum and maximum step sizes are set to $\gamma_{\text{min}} = 10^{-10}$ and $\gamma_{\text{max}} = 1000$ respectively. For the experiments on the selected subset of the CUTEst benchmark suite, we set the initial step size to $\gamma_0 = 0.5$, and the Armijo condition constant to $c = 10^{-5}$. In this case, the minimum and maximum step sizes are fixed to $\gamma_{\text{min}} = 10^{-10}$ and $\gamma_{\text{max}} = 1.0$. In both settings, we use the same expansion and contraction factors for the line-search procedure, namely $\rho = 2.0$ and $\theta = 0.5$, respectively. The finite-difference discretization parameter is kept constant across all iterations and is set to $h_k = 10^{-7}$.

\subsection{Adversarial Perturbation Experiment Details}

In this appendix, we report the implementation and training details of the adversarial perturbation experiments presented in Section~\ref{sec:results}, along with the architecture of the convolutional neural network (CNN) used.

\paragraph*{Data Preprocessing.} We downloaded and preprocessed the training and test sets of MNIST \cite{mnist_dataset} dataset\footnote{We used the MNIST dataset from torchvision; see \href{https://docs.pytorch.org/vision/0.8/datasets.html\#mnist}{https://docs.pytorch.org/vision/0.8/datasets.html\#mnist} } and we normalized the images to be in $[-0.5, 0.5]^d$ where $d = 784$ is the number of pixels of the images as suggested in \cite{liu_svr}. 

\paragraph*{Network Architecture.} We then implemented the multiclass classifier with a convolutional neural network (CNN) consisting of five convolutional layers interleaved with ReLU activations and max pooling operations. The first two convolutional layers use $3\times3$ kernels with $1$ input channel and $32$ output channels, and then $32$ input channels and $64$ output channels, respectively. These are followed by a $2\times2$ max pooling layer with stride $2$. The third convolutional layer applies $3\times3$ filters to produce $64$ output channels, followed by another convolutional layer with $64$ input and output channels using $3\times3$ filters. A second $2\times2$ max pooling layer is applied thereafter. The resulting feature maps are flattened and passed through two fully connected layers with $200$ hidden units each, both activated by ReLU. The last layer is a fully connected layer with $10$ output units and softmax activation. Network architecture is summarized in Table \ref{tab:network_architecture}.

\begin{table}[h]
\centering
\caption{Network architecture}\label{tab:network_architecture}
\begin{tabular}{llll}
\toprule
Layer & Type & Parameters & Activation \\
\midrule
1 & Convolutional & in=$1$, out=$32$, kernel=$3\times3$ & ReLU \\
2 & Convolutional & in=$32$, out=$64$, kernel=$3\times 3$ & ReLU  \\
3 & Max Pooling & kernel=$2 \times 2$, stride=$2$ & --  \\
4 & Convolutional & in=$64$, out=$64$, kernel=$3 \times 3$ & ReLU  \\
5 & Convolutional & in=$64$, out=$64$, kernel=$3 \times 3$ & ReLU  \\
6 & Max Pooling & kernel=$2 \times 2$, stride=$2$ & --  \\
7 & Dense & in=$1024$, out=$200$ & ReLU  \\
8 & Dense & in=$200$, out=$200$ & ReLU  \\
9 & Dense & in=$200$, out=$10$ & Softmax  \\
\bottomrule
\end{tabular}
\end{table}

\paragraph*{Training Details.} Here, we describe the training procedure and the parameters used to train the classifier. In particular, we adopt the defensive distillation technique introduced in \cite{defensive_distillation} to improve robustness against adversarial perturbations. Defensive distillation is a two-stage training procedure based on a teacher–student framework. Let $\mathcal{D} = {(x_i, y_i)}_{i=1}^n$ denote the labeled training dataset, where $x_i \in \mathbb{R}^d$ and $y_i \in {1, \dots, C}$. In the first stage, a teacher network $f_\text{teacher}: \mathbb{R}^d \rightarrow \mathbb{R}^C$ is trained using the softmax function with temperature $T > 1$ i.e. $\text{softmax}_T(z)$ is a vector such that every entry $i$ is 
\begin{equation*}
[\text{softmax}_T(z)]_i = \frac{\exp(z_i / T)}{\sum_{j=1}^C \exp(z_j / T)},
\end{equation*}
where $z \in \mathbb{R}^C$ are the logits produced by the network. The teacher model is trained to minimize the standard cross-entropy loss between the softened predictions and the one-hot encoded true labels. In the second stage, we use the teacher network to produce soft labels i.e. let $\bar{y}_i = \text{softmax}_T(f_\text{teacher}(x_i))$ a student network $f_\text{student}$, with the same architecture as $f_\text{teacher}$, is trained to mimic the teacher’s soft predictions instead of hard labels $y_i$. 
After training, the temperature is reset to $T = 1$ for inference. In our experiments, we set the temperature to $T = 100$ for both teacher and student training stages, as suggested in \cite{defensive_distillation}. We partition the original training set into two disjoint subsets: $80$\% for training and $20\%$ for validation. All models are trained on the training part using the SGD optimizer with a learning rate of $0.01$, momentum $0.9$, batch size of $32$, and early stopping based on validation loss (we interrupt the training if for $5$ consecutive epochs (patience) the validation loss does not decrease). Moreover, we include dropout in the last layer before the output layer with a dropout rate of $0.8$. Training is conducted for up to $100$ epochs. Parameters are summarized in Table \ref{tab:training_params}, and in Table~\ref{tab:net_acc}, we report the classification accuracy obtained by the student model on the training and test sets.

\begin{table}[h]
    \centering
    \caption{Training parameters}\label{tab:training_params}
    \begin{tabular}{ll}
    \toprule
    Parameter &   \\
    \midrule
    Optimizer & SGD \\
    Learning rate & $0.01$ \\
    Momentum & $0.9$ \\
    Batch size & $32$ \\
    Max epochs & $100$ \\
    Temperature & $100$ \\
    Early stopping & Patience of $5$ epochs (on validation loss) \\
    Dropout & $0.8$  \\
    \bottomrule
    \end{tabular}

\end{table}

\begin{table}[H]
	\centering
	\caption{Training and test accuracies of the trained model}\label{tab:net_acc}
	\begin{tabular}{ll}
		\toprule
		 Classification Accuracy (\%) & \\
		 \midrule
		 Training & 98.8\\
		 Test & 98.5 \\
		 \bottomrule
		
	\end{tabular}
\end{table}

\paragraph*{Objective Function.} As described in Section \ref{sec:perf_metrics}, the target function is the black-box attacking loss proposed in \cite{ji_var_red,zoo,liu_svr}. Formally,  let $f_w : \mathbb{R}^d \rightarrow \mathbb{R}^C$ be a classifier parameterized by weights $w$, mapping an input image $z \in \mathbb{R}^d$ to a probability distribution over $C$ classes via the softmax output. Let $(z, y)$ an input-label pair from the MNIST test set correctly classified by the network $f_w$, where $z \in \mathbb{R}^d$ is the original image (normalized to be in $[-0.5, 0.5]^d$) and $y \in \{1, \ldots, C\}$ is the true label. Then, we consider the following objective function,
\begin{equation*}
    F(x) := \max\left\{ \log f_w(\psi(x, z) )_y - \max\limits_{j \neq y} \log f_w(\psi(x, z))_j, -\kappa \right\} + \frac{\lambda}{2} \|\psi(x, z) - z\|^2,
\end{equation*}
where $x \in \mathbb{R}^d$ is a perturbation, $f_w(\cdot)_j$ denotes the predicted probability for class $j$, $\lambda > 0$ is a regularization parameter and $\psi$ is the manipulation function defined as
\begin{equation*}
    \psi(x, z) = \frac{\tanh{(\tanh^{-1}(2z) + x)}}{2}.
\end{equation*}
In the experiments we set $\lambda = 50.0$ and $\kappa=1.0$. The initial condition $x_0$ is fixed for every method as a very large perturbation and it is $x_0 = [10.0,\cdots,10.0]^\top$. %

\paragraph*{Optimization parameter settings.} %
For the adversarial perturbation experiments, we run Algorithm \ref{algo:ffd} using the same parameter configuration for every method. Specifically, we set the initial step size to $\gamma_0 = 1.0$, and the Armijo condition constant to $c = 10^{-7}$. The minimum and maximum stepsize are set to $\gamma_{\text{min}} = 10^{-10}$ and $\gamma_{\text{max}}=1000$ respectively. The expansion and contraction factors for the line-search are set to $\theta = 0.9$ and $\rho = 1/\theta$, respectively. The finite-difference discretization parameter is held constant throughout the optimization process and is set to $h_k = 10^{-7}$ for all iterations.

\section{Limitations}\label{app:limitations}
In this appendix, we discuss the main limitations of this work. This study provides an empirical comparison of finite-difference algorithms using structured and unstructured direction sets. %
The primary objective of this work is to isolate, evaluate and study the effect of structure in direction generation on the performance of Algorithm \ref{algo:ffd}. The second goal is to introduce and analyze several efficient strategies for constructing structured direction matrices, highlighting their potential benefits in terms of gradient approximation quality and optimization performance. That said, a more extensive and systematic empirical evaluation would be necessary to fully characterize the strengths and limitations of various direction generation strategies across a broader range of optimization problems and practical scenarios. Conducting such a benchmark remains an important direction for future research. Additionally, our analysis focuses exclusively on the forward finite-difference estimator defined in eq. \eqref{eqn:forward_fd}. While this estimator is commonly used due to its simplicity and efficiency, alternative estimators (such as central finite differences or interpolation-based approaches \cite{Berahas2022}) are known to offer improved accuracy under certain conditions. Investigating the role of structured directions with these alternative estimators presents an interesting direction for future work. Furthermore, to isolate and study the effect of direction selection, we employed a line-search procedure to adaptively determine the step size, thereby avoiding manual tuning. However, it would be valuable to explore and compare different step-size strategies to understand how step-size choice interacts with direction selection across various settings.

\section{Further Experiments}\label{app:further_experiments}

To further support our findings, we report additional results on three benchmark functions: regularized logistic regression, Trid, and Griewank\footnote{The definitions of the Trid and Griewank functions are taken from \href{https://www.sfu.ca/~ssurjano/optimization.html}{https://www.sfu.ca/~ssurjano/optimization.html}}. In Table~\ref{tab:add_syn_fun_def}, we report the definitions of the additional synthetic functions used in the experiments. For each of these functions, we evaluate both the gradient approximation accuracy and convergence under various choices of direction matrices. In all experiments, we fix the discretization parameter to $h = 10^{-7}$. For the convergence experiments, we set a function evaluation budget of $10000$. The initial step size is set to $\gamma_0 = 1.0$, with minimum and maximum step sizes of $10^{-10}$ and $10^3$, respectively. The contraction and expansion factors are set to $0.5$ and $2.0$, and the Armijo condition parameter is fixed at $10^{-7}$. These additional results complement those presented in Section~\ref{sec:results} and further show the benefits of structured directions across a wider range of objective functions. In all experiments, the input dimension is fixed to $d = 500$. For the logistic regression task, we simulate a binary classification problem by generating $n = 1000$ data points ${(z_i, y_i)}_{i=1}^n$, where each $z_i \in \mathbb{R}^d$ is sampled from a standard multivariate Gaussian $\mathcal{N}(0, I)$, and labels are generated via $y_i = \text{sign}(\langle x^*, z_i \rangle)$, with $x^* \sim \mathcal{N}(0, I)$.

\begin{table}[h]
    \centering
    \caption{Definition of additional synthetic functions}
    \label{tab:add_syn_fun_def}
    \begin{tabular}{ll}
        \toprule
         Name & Definition \\
         \midrule
         Logistic & $F(x) =\frac{1}{n} \sum\limits_{i = 1}^n  \log(1 + e^{-\langle x, z_i \rangle y_i}) + \lambda \|x\|^2$ with $\lambda=10^{-5}$ \\
         Trid & $F(x) = \sum\limits_{i = 1}^{d} (x_i - 1)^2 - \sum\limits_{i = 2}^d x_i x_{i - 1}$\\
         Griewank & $F(x) = 1 + \sum\limits_{i = 1}^d \frac{x_i^2}{4000} - \prod\limits_{i=1}^d \cos\left(\frac{x_i}{\sqrt{i}}\right)$\\
         \bottomrule
    \end{tabular}
\end{table}

\noindent In Figure~\ref{fig:other_grad_acc}, we report the relative gradient approximation error achieved by the surrogate defined in eq. \eqref{eqn:forward_fd} using various strategies to construct the direction matrix. Consistent with the findings presented in Section \ref{sec:results}, we observe that structured methods generally yield lower approximation errors compared to unstructured ones, particularly when the number of directions $\ell \geq d/3$.

\noindent Figure~\ref{fig:other_convergence} shows the function value progress achieved by Algorithm~\ref{algo:ffd} when optimizing the synthetic functions defined in Table~\ref{tab:add_syn_fun_def}, using different approaches for generating direction matrices. The results further confirm the observations from Section~\ref{sec:results}: methods that rely on structured directions (excluding Householder) achieve comparable or superior convergence performance than the unstructured counterparts. As in Section~\ref{sec:results}, when $\ell = d$, all structured methods achieve similar performance with very low variance (i.e., the curves of the structured methods closely overlap with that of the QR method).

\begin{figure}[H]
    \centering
    \includegraphics[width=\linewidth]{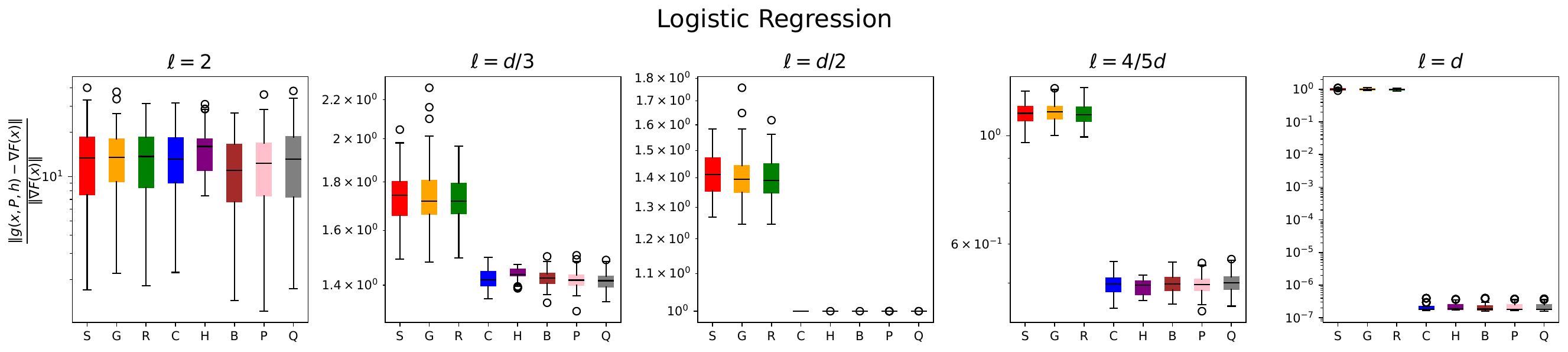}
    \includegraphics[width=\linewidth]{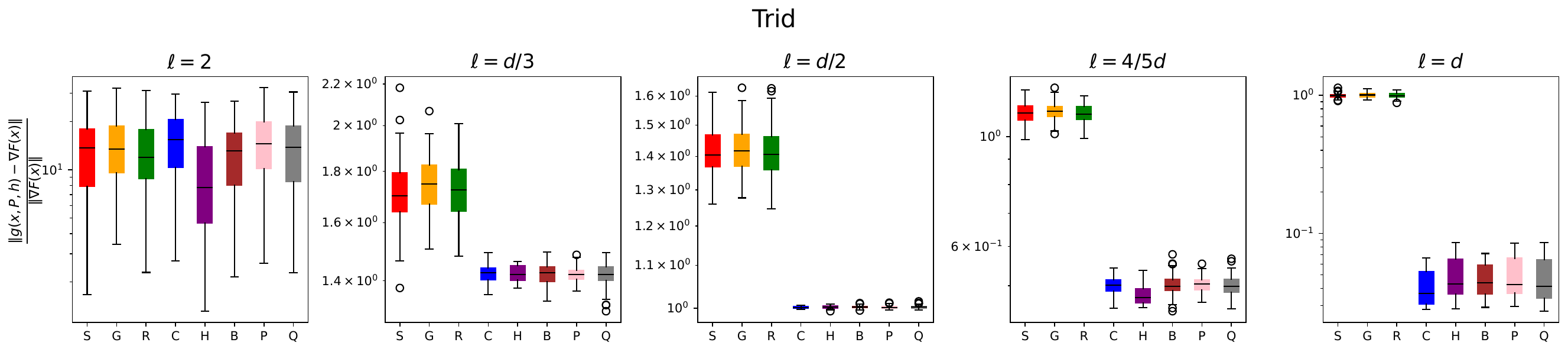}
    \includegraphics[width=\linewidth]{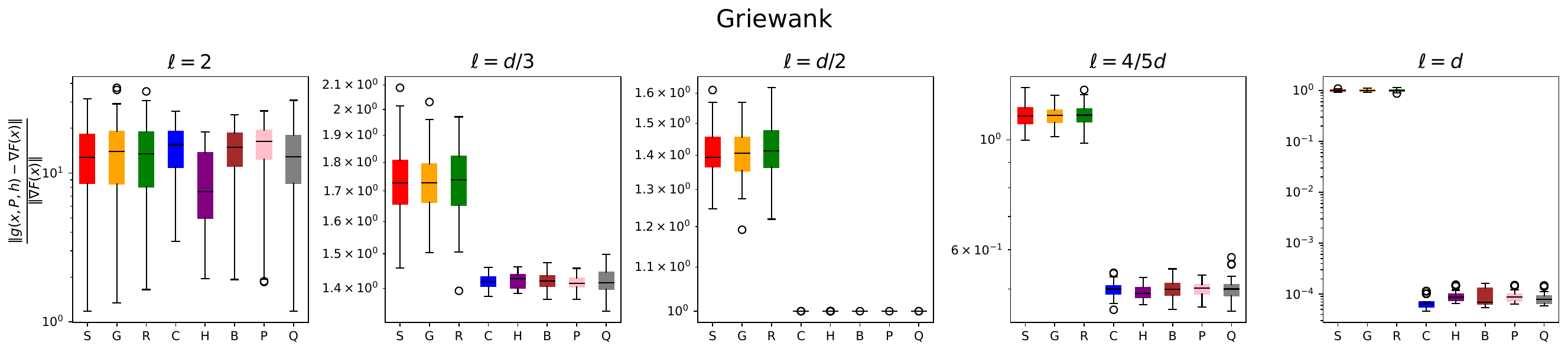}
    \caption{Relative gradient approximation error for the Logistic, Trid, and Griewank functions using the surrogate in Eq.~\eqref{eqn:forward_fd}, with direction matrices generated by: S (Spherical), G (Gaussian), R (Rademacher), C (Coordinate), H (Householder), B (Butterfly), P (Permuted Householder), Q (QR). Note that the scale of y-axis is different in the various plots.}
    \label{fig:other_grad_acc}
\end{figure}

\begin{figure}[H]
    \centering
    \includegraphics[width=\linewidth]{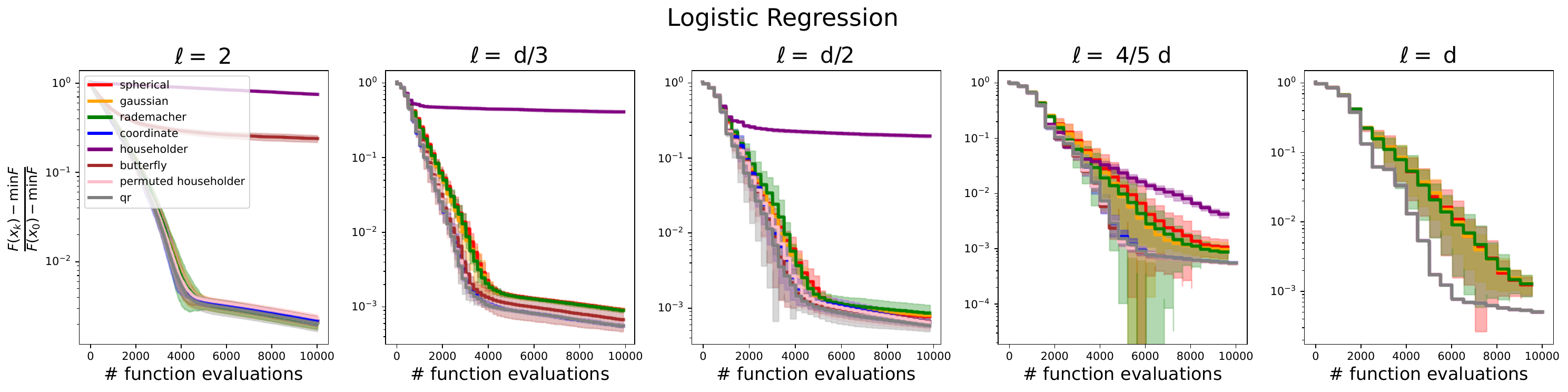}
    \includegraphics[width=\linewidth]{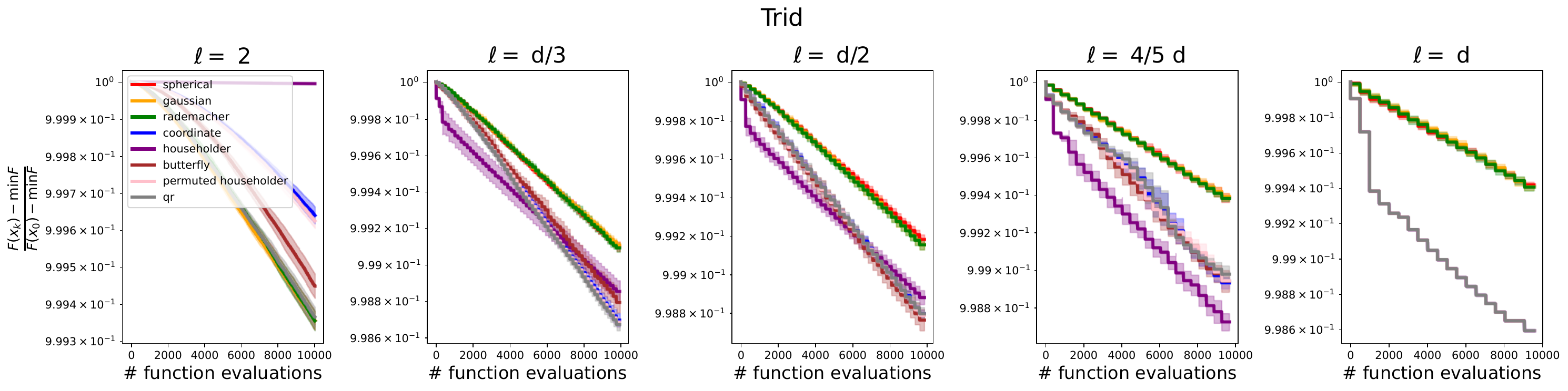}
    \includegraphics[width=\linewidth]{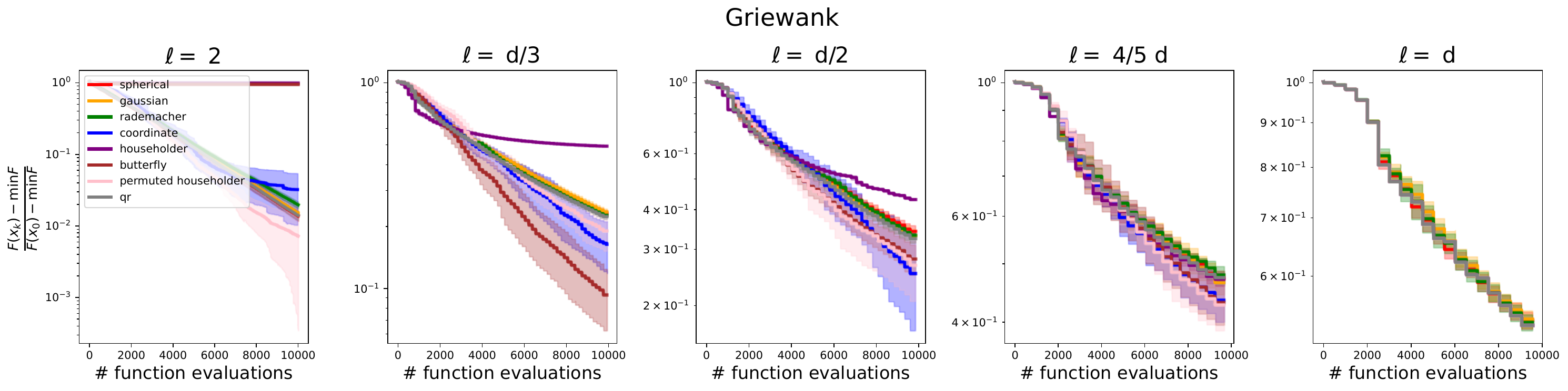}
    \caption{Function value progress in optimizing Logistic, Trid, and Griewank functions.}
    \label{fig:other_convergence}
\end{figure}

\section{Approximation error of structured and unstructured surrogates}\label{app:str_unstr}
In this appendix, we provide further details and the formal proof of the motivating example proposed in Section \ref{sec:preliminary_obs}. 
To this end, we introduce the concept of smoothing. Let $h > 0$ be a smoothing parameter and $\rho$ a probability measure on $\mathbb{R}^d$. The smooth surrogate of $F$ is
\begin{equation*}
    F_{h, \rho}(x) := \int F(x + hu) d\rho(u),
\end{equation*}
Several authors used this notion to analyze finite-difference algorithm for non-smooth setting - see e.g. \cite{rando2023optimal,duchi_power_of_two,nesterov2017random,flaxman2005online}. In the following, we consider $\rho$ to be the uniform measure over the unit ball $\mathbb{B}^{d} := \{ x\in\mathbb{R}^d \,|\,\|x\| \leq 1\}$ i.e. we define the following smooth surrogate
\begin{equation*}
    F_h(x) := \frac{1}{\text{vol}(\mathbb{B}^{d})} \int F(x + h u) du,
\end{equation*}
where $\text{vol}(\mathbb{B}^{d})$ denotes the volume of $\mathbb{B}^d$. Many works \cite{gasnikov_sph,flaxman2005online,Berahas2022}) show that the estimator in eq.~\eqref{eqn:forward_fd}, where the directions $(p^{(i)})_{i = 1}^\ell$ are sampled i.i.d. from the unit sphere, is an unbiased estimator of the gradient of such a surrogate $\nabla F_h(x)$ i.e. for every $x \in \mathbb{R}^d$, $\ell \geq 1$ and $h > 0$,
\begin{equation*}
    \nabla F_h(x) = \frac{d}{\ell} \sum\limits_{i = 1}^\ell \mathbb{E}_{p^{(i)} \sim\mathcal{U}(\mathbb{S}^{d - 1})}\left[\frac{F(x + h p^{(i)}) - F(x)}{h} p^{(i)}\right],
\end{equation*}
where $\mathcal{U}(\mathbb{S}^{d - 1})$ denotes the uniform distribution over the unit sphere $\mathbb{S}^{d-1} := \{v \in \mathbb{R}^{d} \, | \, \|v\| = 1\}$. More recently, it has been proved in \cite[Smoothing Lemma]{rando2023optimal} that it is also possible to obtain an unbiased estimator of $\nabla F_h(x)$ using structured directions. Specifically, if the directions are defined as $p^{(i)} = G e_i$, where $G$ is sampled uniformly from the orthogonal group $O(d) := \left\{ G \in \mathbb{R}^{d \times d} \,|\, \det G \neq 0 \, \wedge \, G^{-1} = G^\top \right\}$ with respect to the Haar measure, we have for every $x\in\mathbb{R}^d$, $\ell \geq 1$ and $h > 0$
\begin{equation*}
    \begin{aligned}
        \nabla F_h(x) &= \frac{d}{\ell} \sum\limits_{i = 1}^\ell \mathbb{E}_{p^{(i)} \sim\mathcal{U}(\mathbb{S}^{d - 1})}\left[\frac{F(x + h p^{(i)}) - F(x)}{h} p^{(i)}\right]\\
        &= \frac{d}{\ell} \sum\limits_{i = 1}^\ell \mathbb{E}_{G \sim \mathcal{U}(O(d)) } \left[ \frac{F(x + h Ge_i) - F(x)}{h} Ge_i \right],
    \end{aligned}
\end{equation*}
where $\mathcal{U}(O(d))$ denotes the uniform distribution over the orthogonal group. While both unstructured (spherical) and structured (orthogonal) directions yield unbiased estimators of the gradient of the smooth surrogate, we show with the following lemma that the estimator constructed using orthogonal directions achieves a smaller approximation error. 
\begin{lem}[Approximation Error]
    Let $P_1 = GI_{d,\ell}$ where $G$ is sampled uniformly from $O(d)$ with respect to the Haar measure and $P_2$ be a random matrix where every column is sampled i.i.d. from the unit sphere. Then, for every $x \in \mathbb{R}^d$ and $h  > 0$, the following inequality holds
    \begin{equation*}
        \mathbb{E}[\|g(x,h,P_1) - \nabla F(x)\|^2] \leq \mathbb{E}[\|g(x,h,P_2) - \nabla F(x)\|^2],
    \end{equation*}
    where $g$ is the finite difference approximation defined in eq. \eqref{eqn:forward_fd}.
\end{lem}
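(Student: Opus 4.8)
The plan is to reduce the inequality to a comparison of \emph{variances} via the bias--variance decomposition, and then to let orthogonality do the work on the cross terms. First I would record that in both models each column is marginally uniform on $\mathbb{S}^{d-1}$: for $P_2$ this is by construction, and for $P_1 = G I_{d,\ell}$ it follows from the invariance of the Haar measure on $O(d)$. Consequently the Smoothing Lemma quoted above gives $\mathbb{E}[g(x,h,P_1)] = \mathbb{E}[g(x,h,P_2)] = \nabla F_h(x)$. Writing
\begin{equation*}
\mathbb{E}[\|g - \nabla F(x)\|^2] = \|\nabla F_h(x) - \nabla F(x)\|^2 + \mathbb{E}[\|g - \nabla F_h(x)\|^2],
\end{equation*}
the bias term $\|\nabla F_h(x) - \nabla F(x)\|^2$ is identical for the two estimators, so it suffices to show $\mathbb{E}[\|g(x,h,P_1) - \nabla F_h(x)\|^2] \le \mathbb{E}[\|g(x,h,P_2) - \nabla F_h(x)\|^2]$.

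Next I would introduce the per-direction term $\phi(v) := \tfrac{d}{h}\big(F(x+hv) - F(x)\big)\,v$, so that $g(x,h,P) = \tfrac{1}{\ell}\sum_{i=1}^{\ell}\phi(p^{(i)})$ and $\mu := \mathbb{E}[\phi(p^{(i)})] = \nabla F_h(x)$ for every $i$ in both models. Expanding the centered square,
\begin{equation*}
\mathbb{E}[\|g - \mu\|^2] = \frac{1}{\ell^2}\sum_{i=1}^{\ell}\mathbb{E}\|\phi(p^{(i)}) - \mu\|^2 + \frac{1}{\ell^2}\sum_{i \neq j}\mathbb{E}\big\langle \phi(p^{(i)}) - \mu,\ \phi(p^{(j)}) - \mu \big\rangle.
\end{equation*}
Since the columns share the same marginal law in the two models, the diagonal terms coincide, and the entire difference between the two variances is carried by the off-diagonal terms.

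The decisive observation is that $\phi(v)$ is a \emph{scalar} multiple of $v$, hence
\begin{equation*}
\big\langle \phi(p^{(i)}), \phi(p^{(j)}) \big\rangle = \frac{d^2}{h^2}\big(F(x+hp^{(i)})-F(x)\big)\big(F(x+hp^{(j)})-F(x)\big)\,\big\langle p^{(i)}, p^{(j)} \big\rangle .
\end{equation*}
For $P_1$ the columns are orthonormal, so $\langle p^{(i)}, p^{(j)}\rangle = 0$ for $i \neq j$ and this inner product vanishes identically; using $\mathbb{E}[\phi(p^{(i)})] = \mu$, each off-diagonal expectation reduces to $0 - \|\mu\|^2 - \|\mu\|^2 + \|\mu\|^2 = -\|\mu\|^2$. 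For $P_2$ the columns are independent, so the off-diagonal expectation factorizes as $\langle \mathbb{E}[\phi(p^{(i)}) - \mu], \mathbb{E}[\phi(p^{(j)}) - \mu]\rangle = 0$. Summing over the $\ell(\ell-1)$ ordered pairs yields
\begin{equation*}
\mathbb{E}[\|g(x,h,P_1)-\mu\|^2] - \mathbb{E}[\|g(x,h,P_2)-\mu\|^2] = -\frac{\ell-1}{\ell}\,\|\nabla F_h(x)\|^2 \le 0,
\end{equation*}
which, combined with the identical bias terms, establishes the claim.

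I do not expect a serious obstacle here; the proof is essentially an exact variance computation rather than an inequality requiring a delicate estimate. The only points demanding care are the justification that the two column distributions share the same spherical marginal (so that the means agree and the diagonal terms cancel), which rests on Haar invariance, and the routine interchange of expectation with the inner products. The crux that makes structure pay off is the elementary identity $\langle p^{(i)}, p^{(j)}\rangle = 0$ for orthonormal directions, which annihilates the correlation between distinct summands and produces a strictly negative off-diagonal contribution whenever $\nabla F_h(x) \neq 0$.
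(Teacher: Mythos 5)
Your proof is correct and follows essentially the same route as the paper's: both reduce the comparison to the cross terms of the expanded square, which vanish pathwise for the orthonormal columns of $P_1$ and factorize through the common mean $\nabla F_h(x)$ for the independent columns of $P_2$, with Haar invariance guaranteeing identical spherical marginals (hence equal diagonal terms) and the smoothing lemmas supplying the unbiasedness. Your bias--variance framing around $\nabla F_h(x)$ is merely a reorganization of the paper's direct expansion around $\nabla F(x)$, and your exact gap $-\tfrac{\ell-1}{\ell}\|\nabla F_h(x)\|^2$ is in fact the dimensionally consistent form of the paper's concluding identity, whose stated factor $\tfrac{d^2}{\ell h^2}(\ell-1)\|\nabla F_h(x)\|$ carries scaling typos.
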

\begin{proof}
We start by bounding the approximation error for structured gradient approximation. Developing the square,
\begin{equation*}
    \|g(x,h,P_1) - \nabla F(x)\|^2 = \|g(x, h, P_1)\|^2 + \| \nabla F(x) \|^2 - 2\langle g(x, h, P_1),\nabla F(x) \rangle.
\end{equation*}
By eq. \eqref{eqn:forward_fd},
\begin{equation*}
    \begin{aligned}
    \|g(x,h,P_1) - \nabla F(x)\|^2 &= \frac{d^2}{\ell^2 h^2} \underbrace{ \left\| \sum\limits_{i = 1}^\ell (F(x + h G e_i) - F(x))G e_i \right\|^2 }_{(a)}\\
&+ \| \nabla F(x) \|^2 - 2\langle g(x, h, P_1),\nabla F(x) \rangle.\\
    \end{aligned}
\end{equation*}
Now, we focus on (a). Let $\delta_i^{G} = F(x + h G e_i) - F(x)$. Then, we have
\begin{equation*}
    \begin{aligned}
         \left\| \sum\limits_{i = 1}^\ell \delta_i^GG e_i \right\|^2 &= \left\langle \sum\limits_{i = 1}^\ell \delta_i^GG e_i, \sum\limits_{j = 1}^\ell \delta_j^GG e_j \right\rangle  = \sum\limits_{i=1}^\ell \sum\limits_{j=1}^\ell  \left\langle \delta_i^GG e_i,  \delta_j^GG e_j \right\rangle \\
        &=\sum\limits_{i=1}^\ell \| \delta_i^GG e_i \|^2 +\ \sum\limits_{j\neq i}  \left\langle \delta_i^GG e_i,  \delta_j^GG e_j \right\rangle .
    \end{aligned}
\end{equation*}
Since $\langle e_i, e_j \rangle = 0$ for every $i\neq j$, we have
\begin{equation*}
    \begin{aligned}
        \left\| \sum\limits_{i = 1}^\ell \delta_i^GG e_i \right\|^2 &= \sum\limits_{i=1}^\ell \| \delta_i^GG e_i \|^2.
    \end{aligned}
\end{equation*}
Thus, we have
\begin{equation*}
    \begin{aligned}
    \|g(x,h,P_1) - \nabla F(x)\|^2 &= \frac{d^2}{\ell^2 h^2} \sum\limits_{i=1}^\ell \| \delta_i^GG e_i \|^2+ \| \nabla F(x) \|^2 - 2\langle g(x, h, P_1),\nabla F(x) \rangle.\\
    \end{aligned}
\end{equation*}
Now, let $\delta_i^{p^{(i)}} = F(x + hp^{(i)}) - F(x)$. Repeating the same procedure for the approximation error using the finite-difference approximation with spherical directions, we get
\begin{equation*}
    \begin{aligned}
    \|g(x,h,P_2) - \nabla F(x)\|^2 &= \frac{d^2}{\ell^2 h^2} \underbrace{ \left\| \sum\limits_{i = 1}^\ell \delta_i^{p^{(i)}}p^{(i)} \right\|^2 }_{(b)}\\
&+ \| \nabla F(x) \|^2 - 2\langle g(x, h, P_2),\nabla F(x) \rangle,\\
    \end{aligned}
\end{equation*}
where the (b) term is
\begin{equation*}
    \begin{aligned}
         \left\| \sum\limits_{i = 1}^\ell \delta_i^{p^{(i)}} p^{(i)} \right\|^2  &=\sum\limits_{i=1}^\ell \| \delta_i^{p^{(i)}} p^{(i)} \|^2 +\ \sum\limits_{j\neq i} \left\langle \delta_i^{p^{(i)}} p^{(i)},  \delta_j^{p^{(j)}} p^{(j)} \right\rangle.
    \end{aligned}
\end{equation*}
Thus, summarizing, we have that using orthogonal directions
\begin{equation}\label{eqn:app_orth_dir}
    \begin{aligned}
        \|g(x,h,P_1) - \nabla F(x)\|^2 &= \frac{d^2}{\ell^2 h^2} \sum\limits_{i=1}^\ell \| \delta_i^GG e_i \|^2\\
        &+ \| \nabla F(x) \|^2 - 2\langle g(x, h, P_1),\nabla F(x) \rangle,        
    \end{aligned}
\end{equation}
while for independent spherical directions we have
\begin{equation}\label{eqn:app_unst_dir}
    \begin{aligned}
        \|g(x,h,P_2) - \nabla F(x)\|^2 &= \frac{d^2}{\ell^2 h^2}  \left( \sum\limits_{i=1}^\ell \| \delta_i^{p^{(i)}} p^{(i)} \|^2 +\ \sum\limits_{j\neq i} \left\langle \delta_i^{p^{(i)}} p^{(i)},  \delta_j^{p^{(j)}} p^{(j)} \right\rangle \right)\\
        &+ \| \nabla F(x) \|^2 - 2\langle g(x, h, P_2),\nabla F(x) \rangle.
    \end{aligned}
\end{equation}
Taking the expectation in eq. \eqref{eqn:app_orth_dir}, by \cite[Lemma 1]{rando2023optimal}, we get
\begin{equation*}
    \begin{aligned}
    \mathbb{E}[\|g(x,h,P_1) - \nabla F(x)\|^2] &= \frac{d^2}{\ell^2 h^2} \sum\limits_{i=1}^\ell \mathbb{E}[\| \delta_i^GG e_i \|^2] + \| \nabla F(x) \|^2 - 2\langle \nabla F_h(x),\nabla F(x) \rangle.   
    \end{aligned}
\end{equation*}
Moreover, by \cite[Theorem 3.7]{mattila_1995}, we have that $\mathbb{E}[\| \delta_i^GG e_i \|^2] = \mathbb{E}[\| \delta_i^{p^{(i)}} p^{(i)} \|^2]$ with $p^{(i)}$ sampled uniformly from the sphere $\mathbb{S}^{d-1}$. Thus, we have
\begin{equation}\label{eqn:app_orth_dir_2}
    \begin{aligned}
    \mathbb{E}[\|g(x,h,P_1) - \nabla F(x)\|^2] &= \frac{d^2}{\ell^2 h^2} \sum\limits_{i=1}^\ell \mathbb{E}[\| \delta_i^{p^{(i)}} p^{(i)} \|^2] + \| \nabla F(x) \|^2 - 2\langle \nabla F_h(x),\nabla F(x) \rangle.   
    \end{aligned}
\end{equation}
Taking the expectation in eq. \eqref{eqn:app_unst_dir}, by \cite[Lemma 1]{flaxman2005online}, we get
\begin{equation*}
    \begin{aligned}
        \mathbb{E}[\|g(x,h,P_2) - \nabla F(x)\|^2] &= \frac{d^2}{\ell^2 h^2}  \left( \sum\limits_{i=1}^\ell \mathbb{E} \left[\| \delta_i^{p^{(i)}} p^{(i)} \|^2\right] +\ \sum\limits_{j\neq i} \mathbb{E} \left[\left\langle \delta_i^{p^{(i)}} p^{(i)},  \delta_j^{p^{(j)}} p^{(j)} \right\rangle \right] \right)\\
        &+ \| \nabla F(x) \|^2 - 2\langle \nabla F_h(x),\nabla F(x) \rangle.
    \end{aligned}    
\end{equation*}
Since $p^{(i)}$, $p^{(j)}$ are independent, again by \cite[Lemma 1]{flaxman2005online}, we have
\begin{equation}\label{eqn:app_unst_dir_2}
    \begin{aligned}
        \mathbb{E}[\|g(x,h,P_2) - \nabla F(x)\|^2] &= \frac{d^2}{\ell^2 h^2}  \left( \sum\limits_{i=1}^\ell \mathbb{E} \left[\| \delta_i^{p^{(i)}} p^{(i)} \|^2\right] +\ \sum\limits_{j\neq i} \| \nabla F_h(x) \|^2\right)\\
        &+ \| \nabla F(x) \|^2 - 2\langle \nabla F_h(x),\nabla F(x) \rangle.
    \end{aligned}    
\end{equation}
Therefore equations \eqref{eqn:app_unst_dir_2} and \eqref{eqn:app_orth_dir_2} yield
\begin{equation}
 \mathbb{E}[\|g(x,h,P_1) - \nabla F(x)\|^2] = \mathbb{E}[\|g(x,h,P_2) - \nabla F(x)\|^2] -\frac{d^2}{\ell h^2}(\ell-1)\|\nabla F_h(x)\|,
\end{equation}
and the statement follows.

\end{proof}

\end{document}